
\documentclass[10pt]{amsart}
\addtolength\headheight{4pt}
\usepackage{amssymb}
\usepackage{color}
\usepackage{graphicx}
\usepackage{float}
\usepackage[all,cmtip]{xy}
\setlength\parindent{0pt}
\setlength{\oddsidemargin}{5pt} \setlength{\evensidemargin}{5pt}
\setlength{\textwidth}{440pt}
\setlength{\topmargin}{-50pt}
\setlength{\textheight}{24cm}

\newcommand{\SA}{{\mathcal{A}}}

\newcommand{\SC}{{\mathcal{C}}}

\newcommand{\SE}{{\mathcal{E}}}
\newcommand{\SF}{{\mathcal{F}}}

\newcommand{\SH}{{\mathcal{H}}}

\newcommand{\SL}{{\mathcal{L}}}

\newcommand{\SP}{{\mathcal{P}}}

\newcommand{\ST}{{\mathcal{T}}}

\newcommand{\Z}{\mathbb{Z}}

\newcommand{\R}{\mathbb{R}}

\renewcommand{\S}{\mathbb{S}}
\renewcommand{\SC}{\mathcal{C}}
\renewcommand{\SA}{\mathcal{A}}
\newcommand{\T}{{\mathbb{T}}}

\newcommand{\im}{\operatorname{im}}

\newcommand{\End}{\operatorname{End}}

\newcommand{\Cont}{\operatorname{Cont}}

\newcommand{\Diff}{\operatorname{Diff}}
\newcommand{\Op}{\operatorname{Op}}


\newtheorem{proposition}{Proposition}
\newtheorem{theorem}[proposition]{Theorem}
\newtheorem{definition}[proposition]{Definition}
\newtheorem{lemma}[proposition]{Lemma}

\newtheorem{corollary}[proposition]{Corollary}
\newtheorem{remark}[proposition]{Remark}

\newtheorem{example}[proposition]{Example}

\begin{document}

\title{h--principle for 4--dimensional Contact Foliations}

\subjclass[2010]{Primary: 53D10.}
\date{March, 2014}

\keywords{contact structures, foliations, h--principle.}

\author{Roger Casals}
\address{Universidad Aut\'onoma de Madrid and Instituto de Ciencias Matem\'aticas -- CSIC.
C. Nicol\'as Cabrera, 13--15, 28049, Madrid, Spain.}
\email{casals.roger@icmat.es}

\author{\'Alvaro del Pino}
\address{Universidad Aut\'onoma de Madrid and Instituto de Ciencias Matem\'aticas -- CSIC.
C. Nicol\'as Cabrera, 13--15, 28049, Madrid, Spain.}
\email{alvaro.delpino@icmat.es}

\author{Francisco Presas}
\address{Instituto de Ciencias Matem\'aticas -- CSIC.
C. Nicol\'as Cabrera, 13--15, 28049, Madrid, Spain.}
\email{fpresas@icmat.es}

\begin{abstract}
In this article we introduce the topological study of codimension--1 foliations which admit contact or symplectic structures on the leaves. A parametric existence h--principle for foliated contact structures is provided for any cooriented foliation in a closed oriented 4--fold.
\end{abstract}
\maketitle

In the present article we study the contact and symplectic topology of the leaves of a codimension--1 foliation in a given manifold. We introduce the definitions and objects of interest, detail their basic properties and explain related constructions. In this sense, the article establishes the foundations of foliated contact and symplectic topology. This work also proves a parametric existence h--principle for foliated contact 4--folds.\\

Symplectic and contact structures give rise to geometries in which there is a balance between flexible and rigid phenomena. On one hand, existence h--principles for both structures are satisfied in the open case \cite{EM,Gr2} and in the closed case there exists a flexible class of closed contact manifolds \cite{El,BEM}. On the other hand, the extension of a germ of a symplectic structure to the interior of a ball can be obstructed \cite{Gr1}. From the viewpoint of their transformation groups, relevant geometric structures have been found \cite{EKP,Po} and rigid dynamical behaviours are known to occur, for instance Arnold's Conjecture in the symplectic case \cite{FO,LT} and Weinstein's Conjecture in the contact case \cite{Ta,We}.\\

These features distinguish contact and symplectic topology in the realm of differential topology. We expect that contact and (strong) symplectic foliations, introduced in this article, will form an interesting subclass of foliations. In particular, there should also exist a dichotomy between flexible and rigid results. The existence of a foliated contact structure for a codimension--1 foliation is a flexibility result. This article exhibits such flexibility in the case of foliated 4--folds.\\

Let us state the main result. For this, we set up the following notation. The pair $(V, \SF)$ denotes a smooth oriented manifold $V$ endowed with a regular cooriented codimension--1 foliation $\SF$. (This pair is fixed along the article.) Note that a manifold $V$ admits such a foliation if and only if $\chi(V)=0$, confer \cite[Theorem 1.A]{Th}. Consider the space $\SP(V, \SF)$ of codimension--2 distributions $\xi$ on $V$ such that $\xi\subset T\SF$, endowed with the compact--open topology. Define the following two spaces:
\begin{eqnarray*}
\SC(V, \SF)& = & \{\xi \in\SP(V, \SF):\xi\mbox{ induces a contact structure on each leaf of $\SF$}\}, \\
\SA(V, \SF)& = & \{(\xi, J): \xi \in \SP(V, \SF), J\in\End(\xi), J^2= -id\mbox{ and $J$ compatible with } \xi\}.
\end{eqnarray*}
The elements in $\SC(V, \SF)$ are called contact foliations (or foliated contact structures), and those in $\SA(V, \SF)$ almost contact foliations (or foliated almost contact structures).\\

The space of compatible almost complex structures for a fixed contact structure is contractible, hence there exist maps $\pi_k\iota: \pi_k\SC(V,\SF) \to \pi_k\SA(V,\SF)$ between the homotopy groups. The study of $\SA(V,\SF)$ lies within algebraic topology, whereas the understanding of the space of contact structures $\SC(V,\SF)$ requires geometry.\\

In $3$--dimensional contact topology there are no (geometric) obstructions for the existence of a contact structure in a given 3--fold \cite{Ma,Lu}. The main result of this article states that the obstructions for the existence of a codimension--1 contact foliation are also strictly topological:

\begin{theorem}\label{thm:main}
Let $(\xi,J)$ be a foliated almost contact structure on $(V^4,\SF)$. There exists a homotopy $\{(\xi_t, J_t)\} \subset \SA(V, \SF)$ of foliated almost contact structures such that $(\xi_0, J_0)=(\xi, J)$ and $\xi_1$ is a contact foliation.
\end{theorem}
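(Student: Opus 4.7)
The plan is to reduce Theorem \ref{thm:main} to a parametric h--principle for contact structures in 3--dimensional foliation boxes, and to assemble the global contact foliation by induction over a cover of $V$ adapted to $\SF$. The leaves of $\SF$ are oriented 3--folds, so what is needed is, essentially, a transverse--coherent version of the classical Martinet--Lutz existence theorem together with Eliashberg's h--principle for overtwisted contact structures.

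For the local step I would consider a foliation chart $U \cong B^3 \times I$ in which $\SF$ coincides with the fibers of the projection $U \to I$. A foliated almost contact structure on $U$ is exactly a smooth family $\{(\xi_t, J_t)\}_{t \in I}$ of almost contact structures on $B^3$, and the objective is to deform it, through families of almost contact structures, to a family of genuine contact structures, keeping fixed whatever is already contact near $\partial B^3 \times I$ or at the endpoints of $I$. Because $B^3$ is an open 3--manifold, this is an instance of Gromov's parametric and relative h--principle for contact structures on open manifolds; it is classical and handles the local step.

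For the global assembly I would choose a finite cover $\{U_i\}_{i=1}^N$ of $V$ by foliation charts of the above type, refined so that all finite intersections are themselves foliation boxes with the simplest possible topology. Then I would proceed inductively: supposing $(\xi, J)$ has already been homotoped through $\SA(V, \SF)$ to some $(\xi_k, J_k)$ whose restriction to $U_1 \cup \cdots \cup U_k$ is a contact foliation, extend the homotopy across $U_{k+1}$ by invoking the local relative h--principle on $U_{k+1}$ with prescribed boundary data $\xi_k$ on $\overline{U_{k+1}} \cap (U_1 \cup \cdots \cup U_k)$, and patch the two homotopies together with a cut--off function in the transverse direction.

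The hard part is the gluing: in 3--dimensional contact topology tightness obstructs flexibility, and the relative h--principle on $B^3$ only runs provided the given boundary germ admits \emph{some} contact filling, which is not automatic when the germ is tight. The remedy is a preparatory homotopy that makes the leafwise contact structure overtwisted near each prospective gluing locus by means of transverse Lutz twists; after this preparation the relative extension falls into the soft regime governed by the parametric Borman--Eliashberg--Murphy theorem, and can be carried out uniformly across the transverse parameter. Performing these Lutz twists coherently in the transverse direction, while respecting the compatible almost complex structure $J$ and not destroying already--constructed pieces of the contact foliation, is where I expect the substantive work of the proof to lie.
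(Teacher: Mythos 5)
Your outline does coincide, at the level of strategy, with the paper's proof: Gromov's open relative parametric h--principle handles everything below the top stratum, Lutz twists are inserted to reach the overtwisted regime, and the parametric relative classification of overtwisted structures finishes the top--dimensional pieces. However, the step you explicitly defer --- ``performing these Lutz twists coherently in the transverse direction'' --- is precisely the substantive content of the theorem, and as written your plan has a genuine gap there. To apply the relative parametric overtwisted h--principle (Theorem \ref{thm:otHolonomous}) on a foliated chart $I\times D^3$, relative to $\partial I$ and to $\Op(\S^2)$, you need an overtwisted disk in the slice $\{t\}\times D^3$ for \emph{every} value of the transverse parameter $t$, and the Lutz modification must be carried out where the structure is already genuinely contact while remaining part of a global homotopy of almost contact structures that does not destroy the contact region you keep fixed. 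The paper does this by constructing, for each $4$--cell, a curve transverse to $\SF$ crossing the cell and otherwise contained in the prepared neighbourhood of the $3$--skeleton, putting the leafwise contact form in the normal form $dz+f\,d\theta$ along a tube around it (Proposition \ref{lem:localModelCurve}), and then performing a transverse family of full Lutz twists there (Proposition \ref{prop:4skeleton}); none of this is supplied, or replaceable by a cut--off in the transverse direction, in your proposal.

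The sharper point is that closing such a transverse arc into a transverse circle uses tautness of $\SF$ (Lemma \ref{lem:constructionGamma}); for a general foliation, e.g.\ one with Reeb components, certain leaves meet no closed transversal, so a circle of Lutz twists is unavailable, and a twist along a transverse interval must be damped to zero somewhere, leaving plane fields that are not contact exactly where your inductive scheme assumes contactness. The paper's resolution is the \emph{vanishing family of Lutz twists} (Lemma \ref{lem:normGeneral}, Definition \ref{def:vanish}): the twisting is full over the part of the interval crossing the $4$--cell and is cut off inside $\Op(V^{(3)})$, and the loss of contactness there is harmless because the final application of Theorem \ref{thm:otHolonomous} is only relative to a slightly smaller neighbourhood of the skeleton. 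Without this device (or an equivalent one) your gluing step fails for non--taut $\SF$. Two lesser remarks: the paper organizes the induction via a triangulation adapted to $\SF$ with the explicit local models of Lemma \ref{lem:setupHolonomous}, which is what makes the relative pairs $L\subset K$ in the h--principles controllable --- an ad hoc cover patched by cut--off functions would not obviously give this --- and in dimension $3$ Eliashberg's 1989 classification suffices, so the Borman--Eliashberg--Murphy theorem is not needed.
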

The proof of the Theorem also implies a (weak version of the) parametric h--principle:
\begin{corollary}\label{cor:main}
The maps $\pi_k\iota: \pi_k\SC(V, \SF)\longrightarrow\pi_k\SA(V, \SF)$ are surjective.
\end{corollary}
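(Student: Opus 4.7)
The plan is to deduce surjectivity of $\pi_k\iota$ from the parametric form of the proof of Theorem \ref{thm:main}. Given a class $\alpha\in\pi_k\SA(V,\SF)$, I would represent it by a based continuous map $f:(S^k,\ast)\to(\SA(V,\SF),(\xi_0,J_0))$, writing $f(s)=(\xi_s,J_s)$. By first applying Theorem \ref{thm:main} to the basepoint, we may assume that $\xi_0\in\SC(V,\SF)$, so the basepoint already lies in $\iota(\SC(V,\SF))$.

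Next, I would run the proof of Theorem \ref{thm:main} in families on $\{(\xi_s,J_s)\}_{s\in S^k}$, producing a continuous homotopy $\{(\xi_{s,t},J_{s,t})\}\subset\SA(V,\SF)$ with $t\in[0,1]$, $(\xi_{s,0},J_{s,0})=(\xi_s,J_s)$, and $\xi_{s,1}\in\SC(V,\SF)$ for every $s\in S^k$. The candidate lift is then $g:S^k\to\SC(V,\SF)$ defined by $g(s):=\xi_{s,1}$. Since the space of compatible almost complex structures on a fixed contact distribution is contractible, the composite $\iota\circ g$ is canonically homotopic in $\SA(V,\SF)$ to $s\mapsto(\xi_{s,1},J_{s,1})$; chaining this with the homotopy above shows that $\iota\circ g$ is homotopic to $f$, whence $\pi_k\iota[g]=\alpha$.

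The main obstacle is upgrading Theorem \ref{thm:main} to a statement continuous in an external parameter. Concretely, I expect that one must verify that every auxiliary choice in its proof---adapted open covers transverse to $\SF$, subordinate partitions of unity, local normal forms along the leaves, and the interpolations used to make $\xi_s$ leafwise contact---can be made to depend continuously on $s\in S^k$. This is typically achieved by exploiting the contractibility of the spaces of such auxiliary data, and by arranging that the deformation is trivial on inputs that are already contact foliations, so that the basepoint is preserved throughout. Provided Theorem \ref{thm:main} is proved by an explicit deformation scheme rather than an abstract selection argument, the parametrisation is essentially formal and the corollary falls out at once.
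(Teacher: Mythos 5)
Your main line is the same as the paper's own proof (Section \ref{sec:general}): rerun the proof of Theorem \ref{thm:main} with an external compact parameter space $P=S^k$, observing that the purely topological choices (the adapted triangulation, the neighbourhoods of Lemma \ref{lem:setupHolonomous}, the transverse curves of Lemma \ref{lem:curvesGeneral}) concern only the fixed foliation $\SF$ and hence are parameter--independent, that the normal forms and the vanishing Lutz twists of Lemma \ref{lem:normGeneral} can be produced in $P$--parametric families, and that the h--principles invoked (Theorems \ref{thm:HolonomousRel} and \ref{thm:otHolonomous}) are already parametric and are applied with $K=I\times P$, $L=\Op(\partial I)\times P$. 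Your last step, comparing $\iota\circ g$ with $f$ via contractibility of the space of compatible complex structures, is also how the corollary is meant to be read. So in substance your proposal coincides with the paper's argument.

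One caveat: the mechanism you invoke for basepoint control --- ``arranging that the deformation is trivial on inputs that are already contact foliations'' --- is not available in this scheme, and you should not lean on it. The construction inserts vanishing Lutz twists in every $4$--cell for every parameter value, so parameters at which the input is already a contact foliation are \emph{not} left untouched; making the homotopy relative to such parameters is precisely the relative parametric statement that the paper warns does not a priori hold (this is the point of Proposition \ref{prop:notinjective}: $\pi_0\iota$ fails to be injective), and to run Theorem \ref{thm:otHolonomous} relative to those parameters one would need the relevant overtwisted disks to be present there from the start. The paper sidesteps this by claiming only surjectivity: it produces, for the whole $S^k$--family, a homotopy through $\SA(V,\SF)$ to a family of contact foliations, with no relativity over already--contact inputs and no discussion of basepoints. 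If you wish to be careful about based homotopy groups, say so explicitly and handle the basepoint by a change--of--basepoint along the trace of the homotopy (as is implicit in the paper's weak formulation), rather than by asserting a relative property that the proof does not provide.
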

Theorem \ref{thm:main} is the case $k=0$ of the parametric result stated in Corollary \ref{cor:main}. To be precise, Corollary \ref{cor:main} is not deduced from the statement of Theorem \ref{thm:main} but rather from its method of proof. This is an instance of a flexible phenomenon in foliated contact topology. The argument of Theorem \ref{thm:main} certainly uses the classification of overtwisted structures \cite{El}, but new ideas coming from foliation theory are needed . In particular, the relative parametric version of a foliated h--principle does not a priori hold (this is implied by Proposition \ref{prop:notinjective}).\\

We are also able to present strictly geometric situations, as in the following example. Consider a codimension--1 foliation $(N^3,\SF)$ in a closed oriented 3--fold and the circle bundle $\pi:\mathbb{S}(\SF)\longrightarrow N$ corresponding to the Euler class $e(T\SF)\in H^2(N,\Z)\cong[N,BU(1)]$.
\begin{proposition} \label{prop:notinjective}
The map $\pi_0\iota: \pi_0\SC(\mathbb{S}(\SF), \pi^*\SF) \to \pi_0\SA(\mathbb{S}(\SF),\pi^* \SF)$ is not injective.
\end{proposition}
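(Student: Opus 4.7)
The plan is to exhibit a concrete $(N^3, \SF)$ together with two foliated contact structures on $(\S(\SF), \pi^*\SF)$ that agree in $\pi_0\SA$ but differ in $\pi_0\SC$, the invariant distinguishing them being tightness versus overtwistedness on a compact leaf. Take $N = T^3$ with coordinates $(\theta_1, \theta_2, \theta_3)$, foliated by $\SF = \{\theta_3 = \mathrm{const}\}$. Since $T\SF$ is trivial the Euler class $e(T\SF)$ vanishes, so $\S(\SF) = T^3 \times S^1_\psi = T^4$ and the leaves of $\pi^*\SF$ are the $3$--tori $L_{\theta_3} = T^2_{\theta_1,\theta_2} \times \{\theta_3\} \times S^1_\psi$. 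Let $\xi_{\mathrm{st}} = \ker(\cos\psi\, d\theta_1 + \sin\psi\, d\theta_2)$ be the standard tight contact structure on the model $T^3$ with coordinates $(\theta_1, \theta_2, \psi)$. By Eliashberg's classification \cite{El}, the homotopy class of $\xi_{\mathrm{st}}$ as a $2$--plane field contains an overtwisted contact structure $\xi_{\mathrm{ot}}$. Define $\eta_{\mathrm{st}}, \eta_{\mathrm{ot}} \in \SC(\S(\SF), \pi^*\SF)$ by placing $\xi_{\mathrm{st}}$, respectively $\xi_{\mathrm{ot}}$, on each leaf $L_{\theta_3}$, constant in $\theta_3$; these are smooth distributions in $T(\pi^*\SF)$ inducing a contact structure on every leaf.

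To see that $\eta_{\mathrm{st}}$ and $\eta_{\mathrm{ot}}$ lie in the same component of $\SA(\S(\SF), \pi^*\SF)$: choose a homotopy $\xi_s$ of $2$--plane fields on $T^3$ between $\xi_{\mathrm{st}}$ and $\xi_{\mathrm{ot}}$, make it constant in $\theta_3$ to obtain a path in $\SP(\S(\SF), \pi^*\SF)$, and lift it to $\SA(\S(\SF), \pi^*\SF)$ using the contractibility of the space of compatible almost complex structures on a fixed plane field.

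Conversely, a path $\eta_t \in \SC(\S(\SF), \pi^*\SF)$ connecting $\eta_{\mathrm{st}}$ and $\eta_{\mathrm{ot}}$ would restrict, on any closed leaf $L_0 \cong T^3$, to a smooth path of contact structures from $\xi_{\mathrm{st}}$ to $\xi_{\mathrm{ot}}$; Gray's stability theorem would then yield a contactomorphism between these two, contradicting the tight/overtwisted dichotomy. Hence $\pi_0\iota$ is not injective. The only substantive input is Eliashberg's classification, which supplies the overtwisted representative in the prescribed plane-field class; the remaining steps are a direct restriction-to-leaf argument, so the main point of the proof is conceptual rather than technical, illustrating that foliated contact topology inherits leafwise rigidity in contrast with the surjectivity side of Theorem \ref{thm:main}.
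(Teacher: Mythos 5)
Your construction is correct for the particular pair you chose, and within that scope it is a genuinely more elementary route than the paper's: for $N=T^3$ foliated by $\{\theta_3=\mathrm{const}\}$ the bundle $\mathbb{S}(\SF)$ is trivial, all leaves of $\pi^*\SF$ are closed copies of $T^3$, so a leafwise-constant overtwisted structure in the plane-field class of the canonical tight one exists by Lutz--Eliashberg, the lift of the plane-field homotopy to $\SA$ via contractibility of the compatible $J$'s is fine, and restriction to a closed leaf plus ordinary Gray stability separates the two classes in $\pi_0\SC$. None of this needs Theorem \ref{thm:main}.

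The gap is one of scope: the proposition is stated for an \emph{arbitrary} codimension--1 foliation $(N^3,\SF)$ on a closed oriented 3--fold --- the pair $(N,\SF)$ is fixed before the statement and the claim concerns the associated $(\mathbb{S}(\SF),\pi^*\SF)$ in that generality, and the paper's proof is uniform in $(N,\SF)$. Your argument only establishes the single instance above, and neither of its two key steps survives for a general $\SF$: when the leaves of $\pi^*\SF$ are non-compact or dense (e.g. for the Reeb foliation of $\mathbb{S}^3$, or a linear foliation of $T^3$ with dense leaves) there is no leafwise-constant overtwisted structure to write down, and Gray stability cannot be applied on an open leaf. The paper instead (i) notes that the canonical foliated contact structure on the space of foliated contact elements has tight leaves for every $\SF$, since the space of cooriented contact elements of any (possibly open) surface is tight; (ii) invokes the proof of Theorem \ref{thm:main} (the Lutz-twist insertion in the h--principle argument) to produce a foliated contact structure with an overtwisted leaf in the same class of foliated almost contact structures; and (iii) applies the foliated Gray stability Lemma \ref{lem:gray} on the closed total space $\mathbb{S}(\SF)$, whose flow is tangent to $\SF$ and hence restricts to every leaf, closed or not, to exclude a path in $\SC$. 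To prove the statement as written you would need to supply step (ii) in general and replace your leaf-restricted Gray argument by (iii).
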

This Proposition is a direct application of the tight--overtwisted dichotomy in $3$--dimensional contact topology. It also turns foliated contact structures into an interesting object from the geometric viewpoint.

\begin{remark}
There is currently no definition of a flexible --overtwisted-- class of foliated contact structures such that, for instance, Theorem \ref{thm:main} establishes an isomorphism when restricted to this class.\\

There are examples of contact foliations with tight and overtwisted leaves (and the existence of a transverse family of overtwisted disks would require tautness of $\SF$).
\end{remark}
Rigidity results in contact topology can be generalized to foliated contact topology. For instance, a foliated Weinstein conjecture can be stated and we may apply the techniques used in the contact case to prove them. In particular, the foliated Weinstein conjecture holds for a particular class of foliated contact structures, confer \cite{PP}.\\

The article is organized as follows. Section \ref{sec:intro} defines the objects of interest. It also contains the foliated counterparts of several classical theorems in contact and symplectic topology. Section \ref{sec:contsymptaut} describes different procedures for constructing foliated contact (and symplectic) structures. Although Sections \ref{sec:intro} and \ref{sec:contsymptaut} are not entirely required for the proof of Theorem \ref{thm:main}, they establish the foundations of the theory and provide the reader with intuition on the results and methods that work for these foliated geometries.\\

Section \ref{sec:hprin} states the $h$--principles used in the proofs (coming from contact geometry) and technical lemmas which will be used in the proof of Theorem \ref{thm:main}. Section \ref{sec:proof} establishes Theorem \ref{thm:main} in the case in which the foliation $\SF$ is taut. Section \ref{sec:general} adapts the techniques developed in Section \ref{sec:proof} to complete the proof of Theorem \ref{thm:main} in the general case and also explains the proof of Corollary \ref{cor:main}.   \\

{\bf Acknowledgements:} The authors are thankful to V. Ginzburg, K. Niederkr\"uger, D. Pancholi and A. Rechtman for useful discussions. The authors are supported by the Spanish National Research Project MTM2010--17389. The present work is part of the authors activities within CAST, a Research Network Program of the European Science Foundation. The article was partially written during the stay of the first author at Stanford University, he is grateful to Y. Eliashberg for his hospitality. 


\section{Foliated Contact and Symplectic Topology}\label{sec:intro}
Let $V$ be a smooth oriented manifold. The foliations $\SF$ considered on $V$ are smooth regular codimension one, cooriented and oriented foliations. The distributions appearing in the article are also assumed to be cooriented and oriented.  Given a leaf $\SL$ of a foliation $\SF$, the natural inclusion of the leaf in the foliation is denoted by $\iota_\SL:\SL \longrightarrow V$.\\

In this Section we introduce the objects of interest and provide proofs of their basic properties. Subsection \ref{ssec:contfol} is dedicated to contact foliations and Subsection \ref{ssec:symfol} to symplectic foliations. Constructions and examples shall be explained in Section \ref{sec:contsymptaut}.

\subsection{Contact foliations}\label{ssec:contfol}

The central objects in this article are contact foliations. This notion generalizes the concept of a contact fibration and intertwines the contact topology of the leaves with the their global behaviour governed by the foliation.

\begin{definition}\label{def:contfol} A contact foliation $(\SF,\xi)$ on $V$ is a foliation $\SF$ on $V$ and a codimension--2 distribution $\xi\subset T\SF$ such that $(\SL, i^*_\SL\xi)$ is a contact manifold for every leaf $\SL$. 
\end{definition}
The manifold $V$ must be even dimensional for a contact foliation $(\SF,\xi)$ to exist. Given a codimension--2 distribution $\xi\subset T\SF$, an extension of $\xi$ is a codimension--1 distribution $\Theta\subset TV$ such that $\xi=\Theta\cap T\SF$. The reader should compare this with a contact fibration \cite{CP,Le}. A pair $(\SF,\Theta)$ such that $\Theta\cap\SF$ is a contact structure on the leaves will be referred to as an extended contact foliation. A pair $(\beta,\alpha)$ of $1$--forms is \textsl{associated} to $(\SF,\Theta)$ if $\ker\beta=T\SF$ and $\ker\alpha=\Theta$.

\begin{definition}\label{def:contcon} Let $(\SF,\Theta)$ be an extension of a contact foliation $(\SF,\xi)$ with associated pair $(\beta,\alpha)$. The associated contact connection is the distribution $\SH_{\Theta}=\xi^{\perp d\alpha}\subset (\Theta,d\alpha)$.
\end{definition}

The contact connection is a line field contained in $\Theta$ and satisfying $\xi \oplus \SH_\Theta = \Theta$. In particular, $T\SF \oplus \SH_\Theta = TV$. Observe that $d\alpha$ does not necessarily vanish on $\SH_\Theta$: rather, contraction with a vector field in $\SH_\Theta$ yields a form that vanishes on $\xi$ and $\SH_\Theta$, so it must be a multiple of $\alpha$. Note that the contact connection does not depend on the choice of $\alpha$, it only depends on the extended distribution $\Theta$. Contact connections have been used before in the case of contact fibrations, see for instance \cite{CP}.

\begin{example}\label{ex:mapping} Let $(\SL,\xi_\SL = \ker\alpha_\SL)$ be a contact manifold. The manifold $\SL \times [0,1]$, with coordinates $(p,t)$, has a natural contact foliation structure given by
$$ \tilde\SF = \SL\times\{t\}, \quad  \tilde\xi(p,t)=(\iota_{\SL\times\{t\}})_*\xi_{\SL}(p). $$
Consider $\phi\in\Cont(\SL,\xi_\SL)$ and the associated mapping torus $M(\phi)$ (this is a contact fibration). From the contact foliation viewpoint, it inherits the contact foliation structure $(\SF, \xi)$ from $(\SL \times [0,1], \tilde\SF, \tilde\xi)$ obtained as a quotient. Given a vector field $X\subset T\SF$ such that $\phi_*X=X$, an extension $(\SF,\Theta)$ is obtained by declaring
$$ \Theta = \xi\oplus\langle\partial_t+X\rangle. $$
Denote $H=\alpha_\SL(X)$. Then the contact connection $\SH_\Theta$ is the distribution generated by the vector field $\langle\partial_t+\widetilde X\rangle$ satisfying the equations
$$ \alpha_\SL(\widetilde X)-H = 0,\quad d\alpha_\SL(\widetilde X,v) + dH(v)=0 \quad \forall v\in\ker\alpha_\SL. $$
Hence $\widetilde X$ is the Hamiltonian vector field associated to $H$.

\begin{figure}[ht]
\includegraphics[scale=0.3]{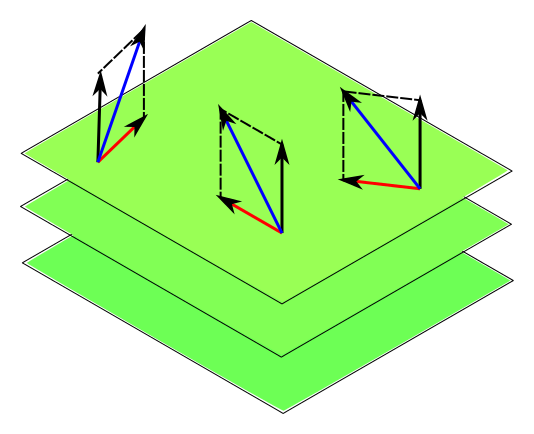}
\caption{Foliation $\SF$ in green and the vector fields $\partial_t$ (black), $X$ (red), and $\partial_t+X$ (blue).}\label{fig:example}
\end{figure}
\end{example}

Given an extension $(\SF,\Theta)$ defined by $(\beta,\alpha)$, we associate two vector fields $T$ and $R$ uniquely determined by the following equations:
$$\alpha(T)=0,\quad (i_Td\alpha)\wedge\alpha=0,\quad\beta(T)=1$$
$$\alpha(R)=1,\quad( i_Rd\alpha)\wedge\beta=0,\quad\beta(R)=0.$$
These vector fields $T$ and $R$ will be referred to as the transverse field and the Reeb field of $(\beta,\alpha)$. Note that $\SH_\Theta = \langle T \rangle$. \\

The parallel transport between the leaves in Example \ref{ex:mapping} is by contactomorphisms. This is a particular instance of the following
\begin{lemma}\label{lem:contparallel} Let $(\SF,\Theta)$ be an extended contact foliation and $(\beta,\alpha)$ an associated pair. Then
$$ \SL_T\alpha=d\alpha(T,R)\alpha.$$
In particular the distribution $\Theta$ is preserved by the flow of the transverse field $T$ of $(\beta,\alpha)$.
\end{lemma}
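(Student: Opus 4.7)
The plan is to compute $\SL_T\alpha$ directly via Cartan's magic formula and then exploit the two defining conditions $\alpha(T)=0$ and $(i_Td\alpha)\wedge\alpha=0$ to identify the result as a functional multiple of $\alpha$, with the coefficient pinned down by pairing against $R$.

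First I would write $\SL_T\alpha = i_Td\alpha + d(i_T\alpha)$. Since $\alpha(T)=0$ by definition of the transverse field, the second term vanishes and we are left with $\SL_T\alpha = i_Td\alpha$. Next, I would use the condition $(i_Td\alpha)\wedge\alpha=0$: a $1$-form whose wedge with $\alpha$ is zero must be a pointwise multiple of $\alpha$, so there exists a function $f$ on $V$ with $i_Td\alpha = f\alpha$. To identify $f$, I would evaluate both sides on the Reeb field $R$: the left-hand side gives $d\alpha(T,R)$, while the right-hand side gives $f\cdot\alpha(R)=f$, because $\alpha(R)=1$. This yields $f = d\alpha(T,R)$ and hence the desired formula $\SL_T\alpha = d\alpha(T,R)\,\alpha$.

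For the ``in particular'' clause, note that $\Theta = \ker\alpha$, so a vector $v$ lies in $\Theta$ precisely when $\alpha(v)=0$. Since $\SL_T\alpha$ is a pointwise multiple of $\alpha$, it vanishes on $\Theta$; equivalently, for the flow $\phi_s^T$ of $T$ and any $v\in\Theta$, we have $\frac{d}{ds}\bigl(\alpha((\phi_s^T)_*v)\bigr)$ expressible as a multiple of $\alpha((\phi_s^T)_*v)$, so by Gr\"onwall the condition $\alpha((\phi_s^T)_*v)=0$ is preserved. Thus $(\phi_s^T)_*\Theta = \Theta$.

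The whole argument is essentially a one-line Cartan computation plus the standard observation that $\SL_X\alpha = g\alpha$ implies $X$ preserves $\ker\alpha$. There is no real obstacle; the only subtle point is recognizing that the two apparently asymmetric normalization conditions $(i_Td\alpha)\wedge\alpha=0$ and $(i_Rd\alpha)\wedge\beta=0$ encode, respectively, ``$i_Td\alpha$ is proportional to $\alpha$'' and the right normalization on $R$ so that pairing $i_Td\alpha$ with $R$ extracts the scalar cleanly.
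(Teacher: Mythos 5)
Your proof is correct and follows essentially the same route as the paper: Cartan's formula together with $\alpha(T)=0$ reduces the claim to $i_Td\alpha = d\alpha(T,R)\,\alpha$, which the paper checks by evaluating on the splitting $\xi\oplus\langle R\rangle\oplus\langle T\rangle$ while you read it off directly from the defining condition $(i_Td\alpha)\wedge\alpha=0$ and normalize with $\alpha(R)=1$ — the same computation in slightly different packaging.
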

It does not hold in general that $T$ preserves $\SF$ since that would imply $d\beta=0$.
\begin{proof}
The Cartan formula gives $\SL_T\alpha=di_T\alpha+i_Td\alpha=i_Td\alpha$. The statement follows from the fact that $i_Td\alpha=d\alpha(T,R)\alpha$, which can be readily verified by evaluation in each factor of $\xi\oplus R\oplus T$.
\end{proof}
Consider the space
$\SE(\SF,\xi)=\{(\SF,\Theta): (\SF,\Theta)\mbox{ is an extension of }(\SF,\xi)\}$. The space of connection $1$--forms in a smooth vector bundle has a natural affine structure. This is also the case for $\SE(\SF,\xi)$. Indeed, one can prove the following
\begin{lemma} \label{lem:affine}
$\SE(\SF,\xi)$ has an affine structure.
\end{lemma}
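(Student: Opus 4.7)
The plan is to identify $\SE(\SF,\xi)$ with the space of smooth splittings of a natural short exact sequence of vector bundles on $V$, a setting in which an affine structure is automatic.

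First I would quotient the tangent exact sequence $0\to T\SF\to TV\to TV/T\SF\to 0$ by the subbundle $\xi\subset T\SF$ to obtain
$$0\longrightarrow T\SF/\xi\longrightarrow TV/\xi\longrightarrow TV/T\SF\longrightarrow 0.$$
Because an extension $\Theta$ has codimension $1$ in $TV$ and meets $T\SF$ exactly in $\xi$, the line bundle $\Theta/\xi\subset TV/\xi$ projects isomorphically onto $TV/T\SF$, so it is precisely the image of a splitting $s_\Theta:TV/T\SF\to TV/\xi$ of the above sequence. Conversely, every splitting $s$ recovers an extension as $\pi^{-1}(\im s)$, where $\pi:TV\to TV/\xi$ is the quotient. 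This establishes a bijection between $\SE(\SF,\xi)$ and the set of splittings.

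Splittings of a short exact sequence of vector bundles form a standard affine space modeled on $\Gamma(\Hom(TV/T\SF,\, T\SF/\xi))$: the difference $s_1-s_0$ of two splittings lands in $\ker(TV/\xi\to TV/T\SF)=T\SF/\xi$, and every affine combination $(1-t)s_0+ts_1$ remains a splitting, since being a right inverse to a linear projection is preserved under affine combinations. Transporting this structure through the bijection gives the desired affine structure on $\SE(\SF,\xi)$. In concrete terms, fixing defining $1$--forms $\beta$ for $T\SF$ and $\alpha_0$ for $\Theta_0$, any other extension admits a defining form $\alpha_1$ with $\alpha_1|_{T\SF}=\alpha_0|_{T\SF}$, which forces $\alpha_1=\alpha_0+g\beta$ for a unique function $g\in C^\infty(V)$; affine combinations of such forms still restrict to $\alpha_0|_{T\SF}$ on $T\SF$, and thus define extensions.

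The only point requiring real care, and the place I would expect to be the main obstacle, is checking that the pointwise splittings assemble into honest smooth subbundles and that the identification is independent of the auxiliary normalizations (the choice of $\beta$, of $\alpha_0$, and of $\alpha_1$); both checks follow once the identification with splittings of the exact sequence is established, as this is manifestly a statement about smooth vector bundles and is unaffected by rescalings of $\beta$ or $\alpha_0$.
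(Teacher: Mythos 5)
Your argument is correct, and it reaches the same underlying fact as the paper by a slightly different, base-point-free route. The paper fixes a reference extension $(\SF,\Theta_0)$ with associated pair $(\beta_0,\alpha_0)$ and identifies $\SE(\SF,\xi)$, via $\alpha\mapsto(\SF,\ker\alpha)$, with the affine space $A=\{\alpha\in\Omega^1(V):\alpha\wedge\beta_0=\alpha_0\wedge\beta_0\}=\{\alpha_0+g\beta_0:g\in C^\infty(V)\}$, which is exactly the concrete description you give in your last step. What you do differently is to bypass the choice of $(\beta_0,\alpha_0)$ by identifying extensions with splittings of $0\to T\SF/\xi\to TV/\xi\to TV/T\SF\to 0$; your verification that $\Theta\mapsto\Theta/\xi$ and $s\mapsto\pi^{-1}(\im s)$ are mutually inverse is right, and splittings do form an affine space modelled on $\Gamma\bigl(\Hom(TV/T\SF,\,T\SF/\xi)\bigr)$. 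This buys canonicity: the affine structure (hence contractibility of $\SE(\SF,\xi)$, which is what the paper uses afterwards) is visibly independent of any normalization, whereas the paper's model space is the $1$--forms vanishing on $\SF$, i.e. $\Gamma\bigl((TV/T\SF)^*\bigr)$; the two models are identified once $\alpha_0$ trivializes $T\SF/\xi$, and a short computation (writing $\Theta_g=\ker(\alpha_0+g\beta_0)$ and checking that the corresponding splitting depends affinely on $g$) shows the two affine structures on $\SE(\SF,\xi)$ coincide. The paper's version is shorter and meshes with the associated-pair formalism $(\beta,\alpha)$ used throughout (e.g. in Lemma \ref{lem:gray}); both treatments handle coorientations at the same level of implicitness, so there is no gap on that score.
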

\begin{proof}
Consider an extension $(\SF,\Theta_0)$ with associated pair $(\beta_0, \alpha_0)$. The space
$$ A = \{\alpha \in \Omega^1(V) : \alpha \wedge \beta_0 = \alpha_0 \wedge \beta_0\}$$
is an affine space modelled on the space of $1$--forms vanishing on $\SF$. The map $\Psi: A \longrightarrow \SE(\SF,\xi)$ defined by $\Psi(\alpha)=(\SF,\ker\alpha)$ is bijective and endows $\SE(\SF,\xi)$ with an affine structure. \\
\end{proof}
In particular, the space $\SE(\SF,\xi)$ is contractible and the choice of extension for a foliated contact structure is unique up to homotopy.\\

The previous definitions and facts suffice for the reader to follow the proof of Theorem \ref{thm:main}. However, we consider appropriate to include a short discussion on the transformations appearing in foliated contact topology. A diffeomorphism of $V$ that preserves both $\SF$ and $\xi$ will be called a foliated contactomorphism. The infinitesimal symmetries are described as follows.
\begin{definition}
Let $(V, \SF, \xi)$ be a contact foliation. The space of contact vector fields of $(V, \SF, \xi)$ is defined as the subspace of $\mathfrak{X}(V)$ of those vector fields that preserve $\SF$ and $\xi$. 
\end{definition}

If an extension $(\SF, \Theta)$ is fixed and an associated pair $(\beta, \alpha)$ is given, the space of contact vector fields is equivalently defined as:
\begin{eqnarray*} \label{eq:contactvf}
\mathcal{C}(\xi) = \{ X \in \mathfrak{X}(V) : & \SL_X (\alpha \wedge \beta) = f\alpha \wedge \beta & \text{ for some } f \in C^\infty(V) \\
																							& \SL_X \beta = g \beta & \text{ for some } g \in C^\infty(V) \}.
\end{eqnarray*}
A flow by foliated contactomorphisms is induced by a 1--parametric family of contact vector fields. 
\begin{lemma}
The space $\mathcal{C}(\xi)$ is a Lie algebra. It contains a distinguished ideal $\mathcal{C}(\xi) \cap \mathfrak{X}(\SF)$. 
\end{lemma}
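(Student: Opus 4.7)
The plan is to verify both statements by direct computation using the characterization of $\mathcal{C}(\xi)$ via the Lie derivatives of $\beta$ and $\alpha \wedge \beta$. The entire argument rests on the standard identity $\SL_{[X,Y]} = [\SL_X, \SL_Y]$ and on the Cartan formula $\beta([X,Y]) = X(\beta(Y)) - (\SL_X \beta)(Y)$, so the proof should be short and largely formal.

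First I would show that $\mathcal{C}(\xi)$ is closed under Lie bracket. Given $X, Y \in \mathcal{C}(\xi)$, there exist functions $f_X, g_X, f_Y, g_Y \in C^\infty(V)$ such that $\SL_X \beta = g_X \beta$, $\SL_Y \beta = g_Y \beta$, $\SL_X(\alpha\wedge\beta) = f_X \alpha \wedge \beta$ and $\SL_Y(\alpha\wedge\beta) = f_Y \alpha \wedge \beta$. A direct computation using $\SL_{[X,Y]} = [\SL_X,\SL_Y]$ yields
\begin{equation*}
\SL_{[X,Y]} \beta = \SL_X(g_Y \beta) - \SL_Y(g_X \beta) = \bigl(X(g_Y) - Y(g_X)\bigr)\beta,
\end{equation*}
and the same computation with $\alpha \wedge \beta$ in place of $\beta$ gives $\SL_{[X,Y]}(\alpha\wedge\beta) = \bigl(X(f_Y) - Y(f_X)\bigr)\alpha\wedge\beta$. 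Hence $[X,Y] \in \mathcal{C}(\xi)$, and since $\mathcal{C}(\xi)$ is manifestly an $\mathbb{R}$--linear subspace of $\mathfrak{X}(V)$, it is a Lie subalgebra.

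Second, I would verify that $\mathcal{C}(\xi) \cap \mathfrak{X}(\SF)$ is an ideal. Note that $Y \in \mathfrak{X}(\SF)$ is equivalent to $\beta(Y) = 0$. Take $X \in \mathcal{C}(\xi)$ and $Y \in \mathcal{C}(\xi) \cap \mathfrak{X}(\SF)$. By the previous paragraph $[X,Y] \in \mathcal{C}(\xi)$, so only tangency to $\SF$ remains. From $(\SL_X \beta)(Y) = X(\beta(Y)) - \beta([X,Y])$ and $\SL_X \beta = g_X \beta$ we obtain
\begin{equation*}
\beta([X,Y]) = X(\beta(Y)) - g_X \beta(Y) = 0,
\end{equation*}
so $[X,Y]$ is tangent to $\SF$, establishing the ideal property.

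I do not foresee any real obstacle: every step is a one-line manipulation of Lie derivatives, and the two defining conditions of $\mathcal{C}(\xi)$ are preserved under brackets essentially because the multiplicative structure of the functions $f_X, g_X$ drops out. The only mild point worth noting is that one must explicitly use $\beta(Y) = 0$ (not just tangency of $Y$ in some weaker sense) to conclude $\beta([X,Y]) = 0$; this is why the ideal is given by the intersection with $\mathfrak{X}(\SF)$ rather than by vector fields merely preserving $\SF$.
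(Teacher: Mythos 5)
Your proof is correct and follows essentially the same route as the paper: the paper simply cites the standard facts that vector fields preserving a distribution form a Lie algebra and that $[X,Y]\in T\SF$ whenever $X$ preserves $\SF$ and $Y\in\mathfrak{X}(\SF)$, while you verify exactly these facts by the direct Lie-derivative computation with $\beta$ and $\alpha\wedge\beta$. No gap; your version just makes explicit what the paper leaves as standard.
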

\begin{proof}
The first statement follows since the space of vector fields preserving a distribution is a Lie algebra. The second claim is immediate, since $[X, Y] \in T\SF$, for $X$ preserving $\SF$ and $Y \in \SF$.
\end{proof}

This incursion into foliated contact vector fields and the use of extensions of a contact foliation lead to a proof of the foliated version of Gray's stability in contact topology (i.e. the moduli space of contact structures is discrete). The precise statement reads as follows.

\begin{lemma}[Foliated Gray's Stability]\label{lem:gray}
Let $\SF$ be a codimension $1$--foliation on a closed manifold $V$ and consider a family $\{\xi_t\}_{t \in [0,1]}$ of codimension--2 distributions such that $(\SF, \xi_t)$ is a foliated contact structure for every $t \in [0,1]$. Then there exists a global flow $\{\phi_t\}_{t\in [0,1]} \in\Diff(V)$ tangent to $\SF$ such that $\phi_t^* \xi_t =\xi_0$.
\end{lemma}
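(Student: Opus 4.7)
The plan is to adapt the classical proof of Gray stability leaf-by-leaf, packaged as a single global equation on $V$ so that smoothness across leaves is automatic. Fix a 1--form $\beta \in \Omega^1(V)$ with $\ker\beta = T\SF$. Using Lemma \ref{lem:affine}, the fiber bundle over $[0,1]$ whose fiber at $t$ is $\SE(\SF,\xi_t)$ has contractible fibers, so there is a smooth family of extensions $(\SF,\Theta_t)$ of $(\SF,\xi_t)$; choose associated pairs $(\beta,\alpha_t)$ depending smoothly on $t$. Let $R_t$ denote the Reeb field of $(\beta,\alpha_t)$: a vector field tangent to $\SF$ (since $\beta(R_t)=0$) which restricts to the usual contact Reeb of $\alpha_t|_\SL$ on every leaf $\SL$.

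Following the classical argument, I look for a time--dependent vector field $Y_t$ tangent to $\SF$ and lying in $\xi_t$, whose flow $\phi_t$ will satisfy $\phi_t^*(\alpha_t|_{T\SF}) = \lambda_t\, \alpha_0|_{T\SF}$ leafwise for some positive function $\lambda_t$. Differentiating in $t$ produces the equation
\[
\dot\alpha_t + \SL_{Y_t}\alpha_t = \mu_t \alpha_t \quad \text{on } T\SF,
\]
and since $\alpha_t(Y_t)=0$ this reduces, via Cartan's formula, to $\dot\alpha_t + i_{Y_t}d\alpha_t = \mu_t\alpha_t$ on $T\SF$. Evaluating on $R_t$ and using that $i_{R_t}d\alpha_t$ vanishes on $T\SF$ forces the choice $\mu_t = \dot\alpha_t(R_t)$, so the remaining condition is the algebraic equation
\[
i_{Y_t} d\alpha_t = \mu_t \alpha_t - \dot\alpha_t \quad \text{on } \xi_t.
\]

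Because $(\SL,\alpha_t|_\SL)$ is contact for every leaf $\SL$, the 2--form $d\alpha_t$ restricts to a symplectic form on the rank $2n$ bundle $\xi_t$ leafwise, and hence fiberwise over $V$. Consequently the last equation has a unique solution $Y_t \in \xi_t$, smooth in all variables because it is obtained by inverting a smoothly varying nondegenerate bilinear form applied to smooth data. Since $V$ is closed and $Y_t$ is tangent to $\SF$, the flow $\phi_t$ is globally defined on $[0,1]$ and preserves each leaf of $\SF$. Restricted to any leaf $\SL$, the construction coincides with classical Gray stability for the path $\alpha_t|_\SL$, so $\phi_t^*(\alpha_t|_\SL) = \lambda_t\, \alpha_0|_\SL$ and therefore $\phi_t^*\xi_t = \xi_0$.

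The main point of care is the organization of the argument: I need to run Gray's stability \emph{simultaneously} on every leaf, and the cleanest way is to work with the extension 1--forms $\alpha_t$ and the associated Reeb fields $R_t$ on all of $V$, using Lemma \ref{lem:affine} to guarantee smooth choices and the contact condition on the leaves to invert $d\alpha_t$ on $\xi_t$. Once this is set up, closedness of $V$ and leaf--tangency of $Y_t$ give the desired global flow by foliated diffeomorphisms.
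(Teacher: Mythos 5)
Your proof is correct and follows essentially the same route as the paper: choose a smooth family of extensions with associated pairs $(\beta,\alpha_t)$, run the foliated Moser/Gray argument, fix the conformal factor by evaluating on the Reeb field $R_t$, and solve uniquely for the leafwise vector field in $\xi_t$ using nondegeneracy of $d\alpha_t$ on $\xi_t$. The only difference is cosmetic: the paper phrases the equations wedged with $\beta$ and integrates an ODE for the conformal factor $g_t$, whereas you restrict to $T\SF$ and let the factor arise from the flow, which is the same computation.
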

\begin{proof}
We consider a smooth $1$--parametric family of extensions $\Theta_t$ and their associated pairs $(\beta, \alpha_t)$. We require a flow $\phi_t$ tangent to the leaves (and therefore preserving $\SF$) such that $\phi_t^* \xi_t =\xi_0$. In terms of the associated forms this reads as
\[ \phi_t^* (\alpha_t) \wedge \beta = g_t\alpha_0 \wedge \beta \]
for a suitable choice of $\{g_t\}_t \in C^\infty(V)$, since $\phi_t$ preserves $\beta$ up to scaling. We now apply the foliated version of Moser's argument.\\

Denote by $X_t$ the vector field generating the required flow $\phi_t$ (that is, $X_t \circ \phi_t = \dot{\phi}_t$) and we further suppose that $X_t$ is contained in the contact structures $\xi_t$. Differentiating the above condition with respect to $t$ we obtain:
\begin{equation} \label{eq:gray1}
 \phi_t^*(\SL_{X_t} \alpha_t + \dot\alpha_t) \wedge \beta = g_t'\alpha_0 \wedge \beta = \dfrac{g_t'}{g_t} (\phi_t^* \alpha_t) \wedge \beta.
 \end{equation}
Define $\lambda_t = (\phi_t)_* \dfrac{g_t'}{g_t}$ and $(\phi_t)_* \beta = m_t \beta$. Equation (\ref{eq:gray1}) implies
$$(\SL_{X_t} \alpha_t + \dot\alpha_t) \wedge (m_t \beta) = \lambda_t m_t\alpha_t \wedge \beta.$$
Since $m_t$ is strictly positive, the equation can we written as
\[ (\SL_{X_t} \alpha_t + \dot\alpha_t) \wedge \beta = \lambda_t \alpha_t \wedge \beta, \]
\begin{equation} \label{eq:gray2}
 (i_{X_t} d\alpha_t + \dot\alpha_t) \wedge \beta = \lambda_t \alpha_t \wedge \beta. 
\end{equation}
This is an equation in $1$--forms. In particular, it has to be satisfied by the Reeb vector $R_t$, thus yielding the condition
\[ (i_{R_t} \dot{\alpha_t}) \beta = \lambda_t \beta. \]
This reads $\phi_t^*(i_{R_t} \dot\alpha_t) = (\ln g_t)'$, which is an ODE with an unique solution once the initial condition $g_0=1$ is fixed. Now, since $R_t \in \ker(\dot\alpha_t - \lambda_t \alpha_t)$, Equation (\ref{eq:gray2}) can solved uniquely for $X_t \in \xi_t$.
\end{proof}

There are foliated analogues of several concepts and results in contact topology. We have introduced the essential notions, and the reader should be able to define any further objects and prove foliated versions of basic results. See for instance Lemmas \ref{lem:moser} and \ref{lem:Darboux}.\\

In the next subsection we shall discuss in brief foliated symplectic topology. There is a subtlety added in comparison to the contact case, yet experts familiar with symplectic and Hamiltonian fibrations will (hopefully) find the definitions quite reasonable. \\

\subsection{Symplectic foliations}\label{ssec:symfol}

In this subsection we treat the case of (strong) symplectic foliations. It is not required in order to prove Theorem \ref{thm:main}, however it serves as a foundational subsection and contributes to a better understanding of foliated contact topology.\\

We begin with the simpler notion of a symplectic foliation.

\begin{definition}\label{def:sympfol} Let $\SF$ be a codimension--1 foliation on a smooth manifold $M$ and $\omega \in \Omega^2(\SF)= \Lambda^2(T^* \SF)$ a 2--form. The pair $(\SF,\omega)$ is a symplectic foliation if, for every leaf $\SL$, $(\SL, i_\SL^* \omega)$ is a symplectic manifold.
\end{definition}

These objects are a generalisation of symplectic fibrations, see \cite{MS}, and they are also referred to as regular Poisson foliations, confer \cite{He}. The subtlety in the symplectic case, in contrast to the contact case, is the closedness of the possible {\it extensions } of the 2--form $\omega$. Given a symplectic foliation, we can consider the space of closed $2$--forms $\Omega$ such that $\Omega|_{T\SF} = \omega$. Then, a pair $(\SF, \Omega)$ with such an $\Omega$ will be called an extension of $(\SF, \omega)$. Symplectic foliations with a fixed extension are also referred to as $2$--calibrated structures \cite{IM2}.\\

The closedness of the extension 2--form is relevant in regard to the concept of a symplectic connection.

\begin{definition}\label{def:sympcon}
Let $(\SF,\omega)$ be a symplectic foliation. Given an extension $(\SF, \Omega)$, the symplectic connection is the distribution $\SH_\Omega = (T\SF)^{\perp \Omega}$.
\end{definition}

If a defining form $\beta$ for the foliation is chosen, a symplectic connection determines a distinguished transverse vector field $T$ defined by
$$\Omega(T)=0, \quad \beta(T)=1, $$
a notion that was already defined, for a particular subclass of symplectic foliations, in \cite{GMP}. The analogue of Lemma \ref{lem:contparallel} for symplectic foliations reads as follows.
\begin{lemma}\label{lem:sympparallel}
Let $(\SF,\omega)$ be a symplectic foliation with extension $(\SF, \Omega)$ and $\beta$ the defining $1$-form for $\SF$. Then:
\[ \SL_T \Omega = 0 \]
\end{lemma}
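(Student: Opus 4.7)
The plan is to apply Cartan's magic formula directly:
\[ \SL_T \Omega = d(i_T \Omega) + i_T(d\Omega). \]
The first summand vanishes by the defining equation of the transverse field, which should be read as $i_T\Omega = 0$ (the form $\Omega$ being of degree $2$). The second summand vanishes because $\Omega$ is closed by the very definition of an extension $(\SF,\Omega)$ of a symplectic foliation. Adding the two we obtain $\SL_T\Omega = 0$, as desired.

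The only conceptual point worth underlining is that this is precisely where the closedness condition in the definition of an extension pays off, and where the symplectic story diverges from the contact one. Indeed, the analogous computation for Lemma \ref{lem:contparallel} applied to $\alpha$ produces the correction term $d\alpha(T,R)\alpha$ since $d\alpha$ does not vanish; in the symplectic setting, $d\Omega=0$ kills the corresponding term. In particular, the transverse flow preserves the full extended $2$-form $\Omega$, and not merely up to scale, which is why $2$-calibrated structures have strictly better transport properties than generic extended contact foliations.

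I do not expect any obstacle; the lemma is a one-line formal consequence of (i) $d\Omega=0$ and (ii) $i_T\Omega=0$. The only bookkeeping to keep straight is to confirm at the outset that the defining conditions $\Omega(T)=0$ and $\beta(T)=1$ uniquely determine $T$ in the presence of a codimension-$1$ foliation with leafwise symplectic form, which follows from $\Omega|_{T\SF}=\omega$ being non-degenerate on $T\SF$: the condition $i_T\Omega=0$ forces $T$ to be transverse to $\SF$ (otherwise $T\in T\SF$ and $i_T\omega=0$ gives $T=0$), and $\beta(T)=1$ normalizes its transverse component uniquely.
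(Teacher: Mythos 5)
Your proof is correct and follows essentially the same route as the paper: Cartan's formula together with $i_T\Omega=0$ and $d\Omega=0$ gives $\SL_T\Omega=0$ in one line. The extra remarks on uniqueness of $T$ and the contrast with Lemma \ref{lem:contparallel} are accurate but not needed, since the paper defines $T$ by exactly these conditions just before the statement.
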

\begin{proof}
By the definition of $T$ and the closedness of $\Omega$, Cartan formula yields
$$\SL_T \Omega = di_X\Omega + i_X d\Omega = 0.$$
\end{proof}
Note that closedness of $\Omega$ is exactly the condition one needs to ensure that the parallel transport induced by the symplectic connection is by symplectomorphisms in the particular case of a fibration, see \cite[Lemma 6.18]{MS}. \\

Consider the space $\mathfrak{E}(\SF, \omega)$ of all extensions of $\omega$. Notice that closedness is a non--trivial condition, and $\mathfrak{E}(\SF, \omega)$ might be empty. This differs from the flexibility found in contact foliations (this difference already appears between contact and symplectic fibrations). The symplectic version of Lemma \ref{lem:affine} also holds.

\begin{lemma}\label{lem:sympcon}
$\mathfrak{E}(\SF, \omega)$ has a natural affine structure space.
\end{lemma}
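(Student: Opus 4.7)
The plan is to imitate the proof template of Lemma \ref{lem:affine}, modified to account for the closedness constraint. Assuming $\mathfrak{E}(\SF,\omega)$ is non-empty (otherwise the statement is vacuous), I would fix a base extension $\Omega_0 \in \mathfrak{E}(\SF,\omega)$ and identify an explicit vector space that acts on $\mathfrak{E}(\SF,\omega)$ freely and transitively by translation.

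The candidate model is the vector space
$$ Z = \{\eta \in \Omega^2(V) : d\eta = 0,\; \eta|_{T\SF} = 0\}. $$
The key steps are then straightforward: first, verify that $Z$ is a real vector space, which is immediate since both closedness and the leafwise vanishing condition are linear. Second, define the translation action $Z \times \mathfrak{E}(\SF,\omega) \to \mathfrak{E}(\SF,\omega)$ by $(\eta,\Omega) \mapsto \Omega + \eta$, and check that $\Omega+\eta$ remains closed (sum of closed forms) and restricts to $\omega$ on each leaf (because $\eta|_{T\SF}=0$). Third, check that the action is free (obvious) and transitive: given $\Omega_1,\Omega_2 \in \mathfrak{E}(\SF,\omega)$, the difference $\Omega_1 - \Omega_2$ is closed and its restriction to $T\SF$ vanishes, hence lies in $Z$. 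These three items are exactly what it means for $\mathfrak{E}(\SF,\omega)$ to be an affine space modelled on $Z$, and since the verification used no special property of $\Omega_0$, the structure is independent of the base point.

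If desired, a concrete description mirroring the contact case can be provided. Pick a defining $1$--form $\beta$ for $\SF$. Any $2$--form vanishing on $\ker\beta$ can be written as $\beta \wedge \gamma$ for some $\gamma \in \Omega^1(V)$, uniquely up to $C^\infty(V)\cdot \beta$. The closedness condition $d(\beta\wedge\gamma)=0$ reads $d\beta \wedge \gamma = \beta \wedge d\gamma$, giving the identification
$$ Z \;\cong\; \bigl\{\gamma \in \Omega^1(V) : d\beta \wedge \gamma = \beta \wedge d\gamma\bigr\}\big/\bigl(C^\infty(V)\cdot \beta\bigr). $$

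There is no serious obstacle in the proof; the only substantive difference with Lemma \ref{lem:affine} is that $Z$ is cut out of the space of leafwise-vanishing $2$--forms by the extra linear condition $d\eta=0$. This is the precise source of the rigidity remarked upon before the statement: it is also the reason that $\mathfrak{E}(\SF,\omega)$ itself can fail to be non-empty, whereas in the contact case extensions always exist.
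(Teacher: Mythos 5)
Your proposal is correct and is essentially the paper's own argument: fix a base extension and observe that $\mathfrak{E}(\SF,\omega)$ is an affine space modelled on the vector space of closed $2$--forms vanishing along $\SF$, the paper merely phrasing the leafwise condition as $\Omega \wedge \beta = \Omega_0 \wedge \beta$ for a defining form $\beta$ instead of $(\Omega-\Omega_0)|_{T\SF}=0$. Your extra identification of the model space via $\beta\wedge\gamma$ is fine but not needed.
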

\begin{proof}
Suppose that $\mathfrak{E}(\SF, \omega)$ is non--empty, fix an extension $\Omega_0$, and let $\beta$ be a defining $1$--form for $\SF$. Then the set
$$\{ \Omega \in \Omega^2(M) : \Omega \wedge \beta = \Omega_0 \wedge \beta, d\Omega = 0 \}$$
is an affine space modelled on the space of closed $2$--forms that vanish along the foliation.
\end{proof}
In order to distinguish the cases in which an extension exists (and thus there is a parallel transport by symplectomorphisms) we introduce the following notation.
\begin{definition} A symplectic foliation $(\SF, \omega)$ is a strong symplectic foliation if the space $\mathfrak{E}(\SF, \omega)$ is non--empty.
\end{definition}

A strong symplectic foliation will be abbreviated to an $s$--symplectic foliation. Observe that the assumption of a foliation being $s$--symplectic is meaningful. Indeed, many results from symplectic topology are likely to extend to this special class of foliations. For instance, approximately holomorphic techniques, see \cite{Ma1}.\\

Let us establish Moser's Lemma and Darboux's Theorem for the case of foliated symplectic topology. Their proofs are left to the reader.

\begin{lemma}[Moser Stability, see \cite{HMS}]\label{lem:moser}
Consider a foliation $\SF$ on a closed manifold $M$ and $\{\omega_t\}_{t \in [0,1]}$ a smooth family of foliated $2$--forms such that $(\SF, \omega_t)$ is a symplectic foliation for every $t$. Let $\{\Omega_t\}$ be a smooth family of associated extensions. Suppose that $[\Omega_t] \in H^2_{DR}(M)$ is constant, then there exists a global flow $\{\phi_t\}_t\in\Diff(M)$ tangent to $\SF$ and such that $\phi_t^* \omega_t =\omega_0$.\hfill$\Box$
\end{lemma}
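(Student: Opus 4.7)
The plan is to perform Moser's trick in the foliated category, using the extension $\Omega_t$ as the bridge between the global de Rham theory on $M$ and the leafwise symplectic geometry. The first step is to exploit the constancy of $[\Omega_t]\in H^2_{DR}(M)$: a standard de Rham homotopy argument applied to the closed $2$--form $\Omega_t$, depending smoothly on $t$, produces a smooth family of primitives $\sigma_t\in\Omega^1(M)$ with $\dot\Omega_t=d\sigma_t$.

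Next, I would define the infinitesimal generator $X_t$ of the desired isotopy as the unique vector field \emph{tangent to} $\SF$ solving
$$ i_{X_t}\omega_t = -\sigma_t|_{T\SF}. $$
Existence and uniqueness of $X_t\in T\SF$ follow from the leafwise non--degeneracy of $\omega_t$. Closedness of $M$ then guarantees that the flow $\phi_t$ of $X_t$ is defined on all of $[0,1]$, and since $X_t$ is tangent to $\SF$, this flow preserves the foliation.

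To conclude that $\phi_t^*\omega_t=\omega_0$ I would differentiate in $t$, using that $\omega_t$ is leafwise closed, i.e. $d_\SF\omega_t=0$, which follows from $d\Omega_t=0$ and $\Omega_t|_{T\SF}=\omega_t$. Cartan's formula, valid for $X_t$ tangent to $\SF$ acting on foliated forms, then yields
$$ \SL_{X_t}\omega_t+\dot\omega_t = d_\SF(i_{X_t}\omega_t)+\dot\omega_t = -d_\SF(\sigma_t|_{T\SF})+\dot\omega_t = -\dot\Omega_t|_{T\SF}+\dot\omega_t = 0, $$
so $\frac{d}{dt}\phi_t^*\omega_t=0$ as foliated forms and the conclusion follows. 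The only conceptual subtlety, and the point where the proof differs qualitatively from the classical one, lies in the reliance on the extension $\Omega_t$: its closedness and fixed cohomology class are exactly what permit the de Rham step, while the identity $\Omega_t|_{T\SF}=\omega_t$ transports the global equation $d\sigma_t=\dot\Omega_t$ into the leafwise Moser equation above. Without the $s$--symplectic assumption the argument could not even be launched, which is precisely the reason the hypothesis on extensions appears in the statement.
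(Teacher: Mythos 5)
Your argument is correct, and it is precisely the standard foliated Moser trick that the paper has in mind: the paper itself omits the proof (it is explicitly left to the reader, with a pointer to \cite{HMS}), so there is nothing to contrast it with --- your use of the extension $\Omega_t$ to produce global primitives $\dot\Omega_t=d\sigma_t$ and then solving $i_{X_t}\omega_t=-\sigma_t|_{T\SF}$ leafwise is exactly the intended route. The only point worth making explicit is the smooth $t$--dependence of the primitives $\sigma_t$, which on a closed manifold follows for instance from Hodge theory, and with that your computation $\SL_{X_t}\omega_t+\dot\omega_t=0$ and the completeness of the flow (tangent to $\SF$, hence foliation--preserving) close the proof.
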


Darboux's Theorem shall provide a local normal form for a symplectic foliation. We describe this local model in the following example.

\begin{example}\label{ex:standardSympFoliation}
The product manifold $\mathbb{R}(t) \times \mathbb{C}^n(x,y)$ can be endowed with the strong symplectic foliation $(\SF_{st},\Omega_{st})$ defined by
\[ \SF_{st} = \{t\} \times \mathbb{C}^n, \quad \Omega_{st} = \sum_{i=1}^n dx_i \wedge dy_i. \]
The standard defining form for $\SF_{st}$ is the 1--form $\beta_{st} = dt$.
\end{example}
We can state Darboux's Theorem in foliated symplectic topology.
\begin{lemma}[Darboux's Theorem]\label{lem:Darboux}
Consider a symplectic foliation $(M,\SF)$ and a point $p\in M$. Then there exists a small $\varepsilon > 0$ and an embedding $$\phi:  D^{2n}_\varepsilon \times (-\varepsilon, \varepsilon)\longrightarrow M$$ such that
$$\phi(0,0) = p, \quad  \phi^*\SF = \SF_{st}, \quad \phi^*\omega = (\Omega_{st})_{\SF_{st}}.$$
\hfill$\Box$
\end{lemma}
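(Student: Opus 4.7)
The plan is to reduce the statement to a parametric Darboux theorem for symplectic forms. First I would apply the Frobenius theorem to obtain a foliated chart $\Psi: \R^{2n}(z) \times (-\delta, \delta)(t) \to M$ with $\Psi(0,0) = p$ and $\Psi^* \SF = \SF_{st}$. Under $\Psi$, the foliated form $\omega$ pulls back to a smooth family $\{\omega_t\}$ of germs of symplectic forms on $(\R^{2n}, 0)$. The task then reduces to producing a smooth family of local diffeomorphisms $\psi_t$ of $(\R^{2n}, 0)$ fixing the origin with $\psi_t^* \omega_t = \Omega_{st}$; the desired map is $\phi(z, t) = \Psi(\psi_t(z), t)$, which is automatically foliated since it preserves the $t$--coordinate.

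To build $\psi_t$, I would first pre-compose with a smooth family $A_t \in \GL(2n, \R)$ satisfying $A_t^* \omega_t(0) = \Omega_{st}(0)$; such $A_t$ can be obtained by a symplectic Gram--Schmidt procedure applied to the smooth family of constant $2$--forms $\omega_t(0) \in \Lambda^2 (\R^{2n})^*$ and depends smoothly on $t$. After this reduction one has $\omega_t(0) = \Omega_{st}(0)$ for all $t$, so $\omega_t - \Omega_{st}$ is a closed $2$--form on a small ball vanishing at the origin. The parametric Poincar\'e lemma (via the standard cone homotopy) provides a smooth family of primitives $\alpha_t$ on a ball $B$ with $d\alpha_t = \omega_t - \Omega_{st}$ and $\alpha_t(0) = 0$.

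I would then run Moser's trick parametrically on the family $\omega_{s, t} = \Omega_{st} + s(\omega_t - \Omega_{st})$, which is non-degenerate on a smaller ball $B' \subset B$ uniformly in $(s, t) \in [0,1] \times (-\delta', \delta')$. Solving $i_{X_{s, t}} \omega_{s, t} = -\alpha_t$ yields a smooth family of vector fields on $B'$ vanishing at the origin; their time--$1$ flow is defined on some ball $D^{2n}_\varepsilon$ uniformly in $t$ and produces the required $\psi_t$, with $\psi_t^*\omega_t = \Omega_{st}$ by the usual Moser computation.

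The main technical point --- not difficult, but the one requiring care --- is ensuring that all the choices (the linear trivialization $A_t$, the primitive $\alpha_t$, and the Moser vector field $X_{s, t}$) depend smoothly on $t$ on a \emph{common} neighborhood of the origin. Smoothness of $A_t$ comes from running a smooth algorithm on $\omega_t(0)$; smoothness of $\alpha_t$ follows from the explicit integral formula in the Poincar\'e lemma; and the time--$1$ flow is defined on a ball of uniform radius because $X_{s, t}(0) = 0$ for all $(s, t)$. Shrinking $\delta$ and taking $\varepsilon$ less than this common radius yields the embedding $\phi$.
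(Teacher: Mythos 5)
Your argument is correct. The paper in fact gives no proof here --- it explicitly leaves Darboux's Theorem (and Lemma \ref{lem:moser}) to the reader --- and your route is precisely the intended standard one: a Frobenius foliated chart, a smooth linear normalization of $\omega_t(0)$, and a parametric Moser trick with a primitive from the cone-homotopy Poincar\'e lemma, with the only delicate point (uniformity of the domain in $t$, after shrinking $\delta$) correctly identified and handled.
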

The strict version of Darboux's Theorem also holds.
\begin{corollary}[Strict Darboux's Theorem]\label{lem:DarbouxStrict}
Consider an s--symplectic foliation $(M,\SF,\omega)$, a point $p\in M$ and an extension $\Omega$. Then there exists a small $\varepsilon > 0$ and an embedding
$$\phi: D^{2n}_\varepsilon \times (-\varepsilon, \varepsilon)\longrightarrow M.$$
such that
$$\phi(0,0) = p, \quad  \phi^*\SF = \SF_{st}, \quad \phi^*\Omega = \Omega_{st}.$$
\hfill$\Box$
\end{corollary}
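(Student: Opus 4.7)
The plan is to bootstrap from the non--strict Darboux theorem (Lemma \ref{lem:Darboux}) and then kill the discrepancy between the two extensions by an explicit leafwise Hamiltonian diffeomorphism. First, I apply Lemma \ref{lem:Darboux} at $p$ to obtain an embedding $\phi_0: D^{2n}_\varepsilon \times (-\varepsilon, \varepsilon) \to M$ with $\phi_0(0,0) = p$, $\phi_0^*\SF = \SF_{st}$, and $\phi_0^*\omega = (\Omega_{st})|_{\SF_{st}}$. After this reduction, on $U := D^{2n}_\varepsilon \times (-\varepsilon,\varepsilon)$, the pulled--back form $\Omega' := \phi_0^*\Omega$ is a closed extension of the standard leafwise symplectic form, and the task reduces to finding a foliation--preserving diffeomorphism $\psi$ of a smaller neighborhood of the origin, fixing $(0,0)$, such that $\psi^*\Omega' = \Omega_{st}$.

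Next I extract a potential for the discrepancy. The $2$--form $\nu := \Omega' - \Omega_{st}$ is closed and vanishes on $T\SF_{st}$, hence factors as $\nu = dt \wedge \eta$ for some $t$--dependent leafwise $1$--form $\eta$. Closedness of $\nu$ then forces $d_{\mathrm{leaf}} \eta = 0$ on each leaf, and since the leaves $\{t\} \times D^{2n}_\varepsilon$ are contractible, a parametric Poincar\'e lemma yields a smooth function $h: U \to \R$ with $\eta = d_{\mathrm{leaf}} h$; subtracting $h(t,0)$ from $h$, we may arrange $h(t,0) = 0$. Thus
\[ \Omega' = \Omega_{st} + dt \wedge d_{\mathrm{leaf}} h. \]

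The key construction is to set $\psi(t, z) := (t, F_t(z))$, where $F_t$ is the time--$t$ map of the leafwise, $t$--dependent Hamiltonian vector field $-X_{h_t}$ associated with $h_t := h(t, \cdot)$ and the leafwise symplectic form. Since $F_0 = \mathrm{Id}$ and the generating vector field is smooth, $\psi$ is a diffeomorphism onto its image on some smaller neighborhood $U' \subset U$ of $(0,0)$; it visibly preserves $\SF_{st}$ and fixes $(0,0)$. A direct calculation, using that each $F_t$ is a leafwise symplectomorphism together with the identity $F_t^*(d_{\mathrm{leaf}} h_t) = d_{\mathrm{leaf}}(h_t \circ F_t)$, shows that the transverse contribution produced by $\psi^*\Omega_{st}$ cancels the one coming from $\psi^*(dt \wedge d_{\mathrm{leaf}} h)$, leaving $\psi^* \Omega' = \Omega_{st}$. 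The composition $\phi_0 \circ \psi|_{U'}$ is then the desired embedding.

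The hard part is really to produce any such $\psi$: the naive Moser interpolation $\Omega_s := (1-s)\Omega_{st} + s\Omega'$ leads to the equation $i_{X_s}\Omega_s = h\, dt$, whose solution forces a transverse component on $X_s$ and hence breaks the requirement that the flow preserve $\SF_{st}$. The leafwise Hamiltonian construction above sidesteps this, trading one abstract invocation of Moser's argument for one explicit coordinate computation that exploits the specific structure of the discrepancy $dt \wedge d_{\mathrm{leaf}} h$.
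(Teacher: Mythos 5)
Your argument is correct. Note that the paper does not actually write a proof of this corollary (it is left to the reader, the implicit expectation being a foliated Moser--type argument in the spirit of Lemma \ref{lem:moser}, i.e.\ interpolating between $\Omega_{st}$ and $\phi_0^*\Omega$ and integrating a leafwise vector field), so what you give is a legitimate alternative route, and every step checks out: the difference $\nu=\phi_0^*\Omega-\Omega_{st}$ is closed and leafwise zero, hence of the form $dt\wedge\eta$ with $d_{\mathrm{leaf}}\eta=0$, the parametric Poincar\'e lemma gives $\nu=dt\wedge d_{\mathrm{leaf}}h$, and conjugating by $\psi(t,z)=(t,F_t(z))$ with $F_t$ the leafwise Hamiltonian isotopy of $h_t$ does satisfy $\psi^*\phi_0^*\Omega=\Omega_{st}$ (the leafwise part is preserved because each $F_t$ is a symplectomorphism, and the transverse terms cancel via $F_t^*(d_{\mathrm{leaf}}h_t)=d_{\mathrm{leaf}}(h_t\circ F_t)$); the only caveats are cosmetic --- the sign of $X_{h_t}$ depends on the convention $i_{X_h}\omega=\pm dh$, the normalization $h(t,0)=0$ is not needed since $\psi(0,0)=(0,0)$ already follows from $F_0=\mathrm{Id}$, and after shrinking one should restrict to a product $D^{2n}_{\varepsilon'}\times(-\varepsilon',\varepsilon')$ to match the statement. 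Your closing remark about the ``naive Moser'' is only half right: the obstruction you describe is tied to the particular primitive $\pm h\,dt$ of $\dot\Omega_s$, and a different choice of primitive (adding $dg$ and solving a transport equation for $g$) restores a leafwise Moser vector field; so Moser is not actually blocked, merely less transparent. It is worth observing that your transverse Hamiltonian flow is the same mechanism the paper itself exploits in Proposition \ref{lem:localModelCurve}, where the radial field is lifted through the Ehresmann connection $\ker\alpha\cap\ker dt$ to flatten the normal form --- your proof buys an explicit flow and avoids the interpolation, while the Moser route generalizes more readily (e.g.\ to the global statement of Lemma \ref{lem:moser}).
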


These Darboux's Theorems also hold for contact foliations (their proofs rely on the foliated Gray's stability theorem). The corresponding statements and details are left to the interested reader.\\

This Section \ref{sec:intro} has introduced the basic definitions and properties of contact and symplectic foliations. Section \ref{sec:contsymptaut} provides some constructions and examples which illustrate the richness of these geometries.

%

\section{Constructions and tautness}\label{sec:contsymptaut}

In this section we present possible constructions of foliated contact and symplectic structures. These are natural generalizations of methods used in contact and symplectic topology, although the structure of the foliation provides interesting classes of examples. These subsections can be read independently.\\

Subsection \ref{ssec:tautness} briefly recalls the definition of a taut foliation (which will be used in Sections \ref{sec:hprin} and \ref{sec:proof}). An interesting example is presented in Subsection \ref{ssec:tori}. Subsection \ref{ssec:contactElements} introduces the space of foliated contact elements and proves Proposition \ref{prop:notinjective}. Finally, Subsections \ref{ssec:contactization}, \ref{ssec:symplectization} and \ref{ssec:div} explain contactization, symplectization and foliated connected sum along divisors.

\subsection{Tautness}\label{ssec:tautness}
This is a property associated to a codimension one foliation $\SF$ on a smooth ($n+1$)--fold $V$. A foliation $\SF$ on a smooth manifold $V$ is said to be (topologically) taut if for every leaf there exists a transverse circle intersecting that leaf.\\

There is also the notion of a geometrically taut foliation: $\SF$ is geometrically taut if there exists a closed form $\tau \in \Omega^n(V)$ such that $i_\SL^* \tau$ is a volume form for every leaf $\SL$. This is equivalent to the existence of a complete non--vanishing vector field $X \in \mathfrak{X}(V)$ transverse to $\SF$ and preserving some volume form $\nu \in \Omega^{n+1}(V)$ (i.e. $\SL_X \nu =0$).\\


A taut foliation $\SF$ is geometrically taut, and the converse holds if the ambient manifold $V$ is closed, see \cite[Theorem II.20]{Su}. Observe that an $s$--symplectic foliation $(M, \SF, \omega)$ is geometrically taut, since for any choice of extension $\Omega$, we have that the closed $(2n)$--form $\Omega^n$ is a leafwise volume form. Thus, the $s$--symplectic foliation $\SF$ is taut if $M$ is closed. There are however non--taut foliations which support contact structures on closed manifolds. We will construct a family of examples in Subsection \ref{ssec:contactElements} (and this is also consequence of the Theorem \ref{thm:main}).\\

Theorem \ref{thm:main} is first proven for the case of a taut foliation $\SF$ in Section \ref{sec:proof}.

 \label{thm:taut}
\subsection{An Example}\label{ssec:tori}
Let us construct a foliated contact structure on a codimension one foliation on the 4--torus $\T^4$. There are four natural type of foliations on the 4--torus obtained by quotienting the horizontal foliation of a 4--polydisk by 3--polydisks. In coordinates $\T^4(t,x,y,z)$ their defining equations have the form
$$\beta=(p,q,r,s)\cdot (dx,dy,dz,-dt),\quad (p,q,r,s)\in\R^4.$$
The horizontal foliation of $\T^4$ by 3--tori is given by a tuple $(p,q,r,s)$ of rationally dependent numbers. In case there are two sets of rationally dependent numbers, the leaves are diffeomorphic to $\T^2\times\R$. If there are three sets of rationally dependent values, the leaves are diffeomorphic to $\S^1\times\R^2$. Finally, if the numbers are rationally independent the leaves are dense $\R^3$ in $\T^4$.\\

Let us endow such foliations with a foliated contact structure. Suppose that $s=1$ and consider the form
$$\alpha=\sin(2\pi z)dx+\cos(2\pi z)dy.$$
This is a well--defined 1--form on $\T^4$. The condition for a foliated contact structure reads
$$\alpha\wedge d\alpha\wedge\beta=(\sin(2\pi z)dx+\cos(2\pi z)dy)\wedge(2\pi\cos(2\pi z)dz\wedge dx-2\pi\sin(2\pi z)dz\wedge dy)\wedge \beta=$$
$$=(2\pi dx\wedge dy\wedge dz)\wedge(p\cdot dx+q\cdot dy+r\cdot dz-dt)=2\pi\cdot dt\wedge dx\wedge dy\wedge dz>0$$
Hence $\alpha$ defines a foliated contact structure for any 1--form $\beta$ as above. In particular, we obtain a contact foliation with (dense) tight contact $\R^3$ leaves on $\T^4$.\\

This example is of a particular interest regarding a Weinstein--type conjecture in foliated contact topology. In this last irrational case, the foliated Reeb vector field has no periodic orbits (even though the ambient manifold $\T^4$ is compact). This example must be placed in contrast to the case with overtwisted leaves, see \cite{PP}.

\subsection{Foliated Contact Elements}\label{ssec:contactElements}

Consider a pair $(W,\SF)$ given by a codimension one cooriented foliation $\SF$ on $W$ and the manifold $V=\mathbb{P}(T^*\SF)$, the projectivised cotangent bundle of $\SF$. There exists a natural projection $\pi:V\longrightarrow W$ and hence the foliation $\SF$ can be pull--backed to $V$ to a foliation $\SF_V = \pi^* \SF$. The projectivization of the cotangent bundle of a manifold has a canonical contact structure. Similarly, the foliated manifold $V$ has a natural foliated contact structure defined by
$$ (\xi_V)_p =\{v \in T_p\SF_V: p(\pi_*v) = 0 \},\quad p\in V,$$
where the point $p$ is identified with a $1$--form in $T_{\pi(p)}\SF$ (which is well--defined up to scaling).\\

In particular, the foliated contact structure restricted to a leaf coincides with the space of contact elements of the leaf. In the same vein, the sphere bundle $S=\mathbb{S}(T^*\SF)$ associated to the cotangent space of the foliation is a foliated contact manifold $(S, \SF_S, \xi_S)$ that restricts to the space of cooriented contact elements over each leaf. The foliated contact structure ($\SF_S,\xi_S)$ can also be obtained via the pullback of $(\SF_V,\xi_V)$ through the double--cover $S\longrightarrow V$.\\

Let us provide an example of two non--isotopic foliated contact structure which are homotopic as foliated almost contact structures.\\

\begin{proof}[Proof of Proposition \ref{prop:notinjective}]
 The standard foliated contact structure on the space of foliated contact elements has tight leaves. This is standard in $3$--dimensional contact topology: the space of cooriented contact elements of a (possibly open) surface is tight. The proof of Theorem \ref{thm:main} shall imply that a foliated contact structure with (at least) an overtwisted leaf can also be constructed in the same homotopy class of foliated almost contact structures. Those two structures cannot be homotopic as foliated contact structures, because Lemma \ref{lem:gray} would imply isotopy of the two contact structures restricted to any leaf.
\end{proof} 


A particular example we consider clarifying is constructed in the following proposition. 


\begin{proposition}
Consider the Reeb foliation $(\S^3,\SF)$ and its associated space of foliated contact elements $(\mathbb{S}^3 \times \mathbb{S}^1,\SF_c, \xi_c)$. There exists a homotopy of the almost contact structures that produces a contact foliation with tight and overtwisted leaves.
\end{proposition}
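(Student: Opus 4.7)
The plan is to start from the canonical foliated contact structure $\xi_c$ on $\S^3\times \S^1$, whose leaves are all tight, and then insert overtwisted disks into a family of planar leaves by a foliated Lutz twist supported away from the compact leaf $T^3 = L_0 \times \S^1$. The outcome is a contact foliation realizing both tight and overtwisted leaves, connected to $\xi_c$ by a homotopy in $\SA(\S^3 \times \S^1, \SF_c)$.

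First, I would recall the geometry. The Reeb foliation $\SF$ on $\S^3$ has a unique compact leaf $L_0 \cong T^2$ separating two Reeb components $R_1, R_2$, while every other leaf is a non--compact plane spiraling onto $L_0$. Since $H^2(\S^3) = 0$, the circle bundle $\mathbb{S}(T^*\SF)$ is trivial, giving $\mathbb{S}(T^*\SF) \cong \S^3 \times \S^1$. The leaves of the pullback foliation $\SF_c$ are therefore $T^3$ over $L_0$ and $\R^2 \times \S^1$ over each planar leaf, and the canonical $\xi_c$ constructed in Subsection \ref{ssec:contactElements} restricts on each leaf to the canonical contact structure on its space of contact elements, which is the (tight) standard one.

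Second, I would localize the modification. Choose a foliated Darboux chart $B \cong D^2 \times (-\varepsilon, \varepsilon)$ for $\SF$ inside the Reeb component $R_1$, with $\overline{B}$ bounded away from $L_0$. The preimage $\pi^{-1}(B) \cong B \times \S^1$ admits coordinates in which each plaque $D^2 \times \{t\} \times \S^1$ of $\SF_c$ is equipped with the Darboux contact form $\alpha = \cos\theta\, dx + \sin\theta\, dy$. This model carries an explicit smooth family of transverse unknotted circles $\{\gamma_t\}_{t\in (-\varepsilon,\varepsilon)}$, e.g. $\gamma_t(s) = (a\sin s, -a\cos s, t, s)$, one in each plaque. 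Performing the classical full Lutz twist along each $\gamma_t$, supported in a small tubular neighbourhood $\SN \subset \pi^{-1}(B)$ of $\bigcup_t \gamma_t$ and cut off smoothly in $t$, produces a codimension--2 distribution $\xi_1 \subset T\SF_c$ which is contact on every leaf and agrees with $\xi_c$ outside $\SN$. By the standard behaviour of the Lutz twist, each plaque meeting $\SN$ acquires an overtwisted disk and becomes overtwisted in $\xi_1$, whereas $T^3$, being disjoint from $\SN$, retains its original tight contact structure; planar leaves of $\SF$ avoiding $B$ likewise give unchanged tight leaves.

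Finally, I would verify the almost contact homotopy. The full Lutz twist provides, leafwise, a homotopy of plane fields from $\xi_c$ to $\xi_1$ preserving the underlying homotopy class. Because this leafwise homotopy depends smoothly on $t$ and is supported inside $\SN$, it extends to a smooth path $\{\xi_s\}_{s \in [0,1]} \subset \SP(\S^3 \times \S^1, \SF_c)$. Attaching a smooth family of compatible almost complex structures $J_s$, whose existence follows from contractibility of the space of compatible $J$'s over a fixed plane field, yields the desired homotopy $(\xi_s, J_s) \in \SA(\S^3 \times \S^1, \SF_c)$ from $(\xi_c, J_c)$ to $(\xi_1, J_1)$. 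The main technical point, which is where the argument requires the most care, is verifying that the parametric Lutz twist patches smoothly along $\partial \SN$ and across the transverse direction to $\SF$; both are routine once the Darboux normal form above and an appropriate cut-off in $t$ are in place.
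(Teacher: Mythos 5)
Your overall strategy (insert a transverse family of Lutz twists supported away from the lift of the compact leaf, so that the $T^3$ leaf keeps its tight canonical structure) is the same as the paper's, but the step ``cut off smoothly in $t$'' is a genuine gap, and it is exactly the point the construction must avoid. On a fixed leaf, the standard model tube around a transverse curve (contact form $dz+r^2d\theta$) is tight, while its full Lutz twist is overtwisted; if there were a path of \emph{contact} structures on the tube, fixed near its boundary, joining the two, then the Moser/Gray argument (applied rel boundary, as in Lemma \ref{lem:gray}) would make them isotopic rel boundary, which is impossible. Hence any smooth interpolation in the transverse parameter $t$ between the twisted and untwisted structures necessarily passes through plane fields that are \emph{not} contact on the intermediate leaves, so your $\xi_1$ is not a contact foliation: it is only an almost contact foliation. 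The paper itself makes this distinction: a $t$--cut--off family of Lutz twists is precisely the ``vanishing family of Lutz twists'' of Definition \ref{def:vanish}, which is introduced as an almost contact deformation and must then be repaired by Eliashberg's h--principle (Theorem \ref{thm:otHolonomous}) in the proof of Theorem \ref{thm:main}; it is never claimed to be leafwise contact.

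The paper's proof of this Proposition sidesteps the issue by choosing the parameter space to be a \emph{closed} transversal: a loop $\gamma\subset\S^3$ transverse to the Reeb foliation (necessarily avoiding the torus leaf, e.g.\ the core of a Reeb component), lifted to $\widetilde\gamma\subset\S^3\times\S^1$, together with an embedded torus $\Gamma(\S^1_\theta\times\S^1_s)$ whose $s$--circles lie in the leaves of $\SF_c$ and are transverse to $\xi_c$. Since the $\theta$--family closes up, every leaf either misses the support of the modification or contains entire $\kappa$--curves and receives a genuine full Lutz twist, so the result is honestly a contact foliation with no interpolation region at all; the leaf $T^2\times\S^1$ is untouched and stays tight. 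You can repair your argument in the same way by replacing your interval of circles $\{\gamma_t\}_{t\in(-\varepsilon,\varepsilon)}$ by an $\S^1$--family over the lift of the core circle of the Reeb component (or, alternatively, keep your cut--off but accept that the output is only almost contact and invoke Theorem \ref{thm:otHolonomous} relative to the region where the structure is already contact, which is essentially the machinery of the general case of Theorem \ref{thm:main} and much heavier than needed here).
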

\begin{proof}
Fix a loop $\gamma: \mathbb{S}^1 \to \mathbb{S}^3$ transverse to the foliation $\SF$ (and thus avoiding the unique torus leaf). Its lift to the space of foliated contact elements is a transverse loop of the form
\begin{eqnarray*}
\widetilde{\gamma}: \mathbb{S}^1 & \longrightarrow & \mathbb{S}^3 \times \mathbb{S}^1 \\
\theta & \longmapsto & (\gamma(\theta), 1).
\end{eqnarray*}
Let us insert a family of Lutz twists (refer to Section \ref{sec:general} for a more detailed account of this construction) by considering an embedding
$$\Gamma:\mathbb{S}^1\times\mathbb{S}^1\longrightarrow\mathbb{S}^3\times\mathbb{S}^1$$
such that $\Gamma(\theta,1)=\widetilde{\gamma}(\theta)$ and the $\kappa$--curve $\{s\longmapsto\kappa(s)=\Gamma(\theta,s)\}$ is tangent to $\SF_c$ and transverse to $\xi_c$.\\

Then we perform a $1$--parametric family of Lutz twists along the $\theta$--family of $\kappa$--curves. The resulting contact structure certainly has overtwisted leaves. However, the leaf $T^2 \times \mathbb{S}^1$ corresponding to the lift of the unique compact leaf on $\SF$ is tight.
\end{proof}

Note that the same proof works for any foliation $(M^3, \SF)$ with a Reeb component: the space of foliated contact elements associated to $(M^3, \SF)$ possesses a foliated contact structure with tight and overtwisted leaves.

\subsection{Contactization}\label{ssec:contactization}

Let $(M,\lambda)$ be an exact symplectic manifold, the contactization $\SC(M,\lambda)$ of $(M,\lambda)$ is the manifold $M\times\R(t)$ endowed with the contact structure $\xi= \ker(\lambda-dt)$. 

\begin{definition}
Let $(M, \SF, \omega)$ be an $s$--symplectic manifold admitting an exact extension $\Omega = d\lambda$. Then $(M\times\R(t), \SF \times \mathbb{R}, \ker(\lambda - dt))$ is called the contactization $\SC(M, \SF, \omega)$ of $(M, \SF, \omega)$.
\end{definition}

In case the symplectic manifold $(M,\omega)$ is not exact there also exists a contactization (known as prequantization). Let $(\SF,\omega)$ be an $s$--symplectic foliation on $M$, with an extension $\Omega$ such that $[\Omega/(2\pi)]$ is integral and consider the principal circle bundle $L_\Omega\longrightarrow M$ associated to $\Omega$. Construct a connection 1--form $\alpha\in\Omega^1(L_\Omega)$ with curvature $\Omega$.


\begin{definition}
The foliated contact manifold $(L_\Omega, \pi^*\SF, \ker \alpha)$ is said to be the Boothby--Wang contact foliation associated to the s--symplectic foliation $(M,\SF,\omega)$ with extension $\Omega$.
\end{definition}

Note that the Boothby--Wang contact foliation over a closed base is taut because the $s$--symplectic foliation is taut.

%

\subsection{Symplectization}\label{ssec:symplectization}

Let $(V, \xi = \ker\alpha)$ a contact manifold. The symplectic manifold $(V \times \mathbb\R(t), d(e^t\alpha))$ is known as the symplectization of $(V,\xi)$.
 
\begin{definition}
Given an extended contact foliation $(V,\SF,\xi,\Theta)$ with associated pair $(\beta,\alpha)$, the symplectization of $(V, \SF, \xi,\alpha)$ is the $s$--symplectic foliation
$$(V\times\R(t), \SF \times \mathbb{R}, \omega = d(e^t\alpha)|_{\SF}).$$
\end{definition}
Notice that $\Omega = d(e^t\alpha)$ is an extension of $\omega$ (and thus $\Omega$ is exact). The $s$--symplectic foliation obtained in the construction does not depend on the particular choice of $\alpha$ and $\Theta$ (up to foliated symplectomorphism).\\

Observe that the symplectization is geometrically taut, since it is a $s$--symplectic foliation. However it is not necessarily taut, if this were the case the starting contact foliation would be taut.

\subsection{Foliated Contact Divisor Connected Sum}\label{ssec:div}

Consider a contact foliation $(V,\SF,\xi)$ on a ($2n+2$)--fold $V$ and let $(S,\SF|_S,\xi|_S)$ be a foliated contact divisor, i.e. $S$ is a $2n$--dimensional submanifold transverse to $\SF$ and $\xi$ such that $\xi|_W$ is a leafwise contact distribution.\\

Generalizing \cite[Theorem 2.5.15]{Ge}, Lemma \ref{lem:gray} implies that the tubular neighbourhood of $W$ is uniquely determined by the conformal symplectic structure of its normal bundle. Since its normal bundle is a 2--dimensional disc bundle, the foliated contact structure of its tubular neighbourhood depends only on its oriented topological type. \\

Let $(\alpha_S,\beta_S)$ be an associated pair for $(S, \SF_S, \xi_S)$ and consider its pull--back (still denoted by $(\alpha_S,\beta_S)$) to the manifold $S \times D^2_{2\varepsilon}(r,\theta)$. Suppose that the normal bundle of $S$ is trivial and thus there exists a neighborhood $\Op(S)$ and an embedding $\phi: S \times D^2_{2\varepsilon} \to\Op(S)$ such that the pull--back of the contact foliation is 
$$(S \times D^2_{2\varepsilon}, \SF_S \times D^2_{2\varepsilon}, \ker(\alpha_{\Op(S)}))\mbox{ with associated pair }
(\beta_{\Op(S)},\alpha_{\Op(S)})=(\beta_S,\alpha_S + r^2 d\theta).$$
This is the local model along the foliated contact divisor. We describe the framework in which we can perform a foliated contact connected sum along such a foliated contact divisor.\\

Let $(V_0,\SF_0,\xi_0)$ and $(V_1,\SF_1,\xi_1)$ be two contact foliations and $f_0:S\longrightarrow V_0$, $f_1:S\longrightarrow V_1$ two embeddings of $S$ as a foliated contact divisor with trivial normal bundle. There exists two neighbourhoods $\Op(S,V_0)$ and $\Op(S,V_1)$ and two embeddings
$$f_0:S\times D^2_{2\varepsilon}\longrightarrow\Op(S,V_0),\quad f_1:S\times D^2_{2\varepsilon}\longrightarrow\Op(S,V_1)$$
conforming the local model described above (and extending $f_0$ and $f_1$).\\

The gluing region is the open manifold $\mathcal{S} =  S \times (-\varepsilon^2, \varepsilon^2) \times\mathbb{S}^1$ contact foliated as
$$(\mathcal{S}, \SF_S \times (-\varepsilon^2, \varepsilon^2) \times\mathbb{S}^1, \ker(\alpha_\mathcal{S})) \mbox{ with associated pair }(\beta_\mathcal{S},\alpha_\mathcal{S}) = (\beta_S,\alpha_S + t d\theta).$$
Note the linearity in the $t$--coordinate. Define the maps
$$F_0: S \times (0, \varepsilon^2) \times\mathbb{S}^1\longrightarrow\Op(S,V_0),\quad (p, t, \theta)\longmapsto f_0(p, t^2, \theta)$$
$$F_1: S \times (-\varepsilon^2,0) \times\mathbb{S}^1\longrightarrow\Op(S,V_1),\quad (p, t, \theta)\longmapsto f_1(p, t^2, -\theta)$$
Then the topological connected sum
$$V_0 \#_S V_1 = (V_0\setminus f_0(S)) \cup_{F_0} \mathcal{S}  \cup_{F_1} (V_1\setminus f_1(S))$$
with the foliated contact models introduced above inherits a foliated contact structure. The related construction for symplectic foliations is discussed in \cite{IM2}, though it does not preserve strongness. \\

Sections \ref{sec:intro} and \ref{sec:contsymptaut} have presented the definitions, results and constructions in foliated contact and symplectic topology. The remaining Sections \ref{sec:hprin}, \ref{sec:proof} and \ref{sec:general} shall focus on the proof of Theorem \ref{thm:main}. \\


\section{h--Principle and Local Models}\label{sec:hprin}

In this Section we state the h--principles and the technical lemmas involved in the proof of Theorem \ref{thm:main}. Subsections \ref{ssec:hLemma} and \ref{ssec:eliashberg} introduce the appropriate version of the h--principles for the open and the closed case respectively. Subsection \ref{ssec:adapt} contains the topological local models and Subsection \ref{ssec:3sk} begins to construct the foliated contact structure on the 3--skeleton of $(V^4,\SF)$.

\subsection{h--Principle for Open Manifolds} \label{ssec:hLemma}

The h--principles proven by M. Gromov \cite{Gr2} include a parametric (and relative) h--principle for the existence of contact structures on open manifolds. This h--principle can also be proven using the holonomic approximation developed in \cite{EM}.\\

Let us state the exact h--principle we use in the proof of Theorem \ref{thm:main}.

\begin{theorem}\label{thm:HolonomousRel} $($\cite{EM,Gr2}$)$
Let $U$ be an open domain in a smooth manifold $V$ and $A \subset U$ a CW--complex of codimension at least $2$. Let $K$ be a compact space and $L\subset K$ a closed subset. Consider a continuous family $\{(\xi_t, J_t)\}_{t\in K}$ of almost contact structures on $V$ which are contact in $U$ for $t\in L$ and are contact near A for $t\in K$.\\

There exists a continuous deformation $\{(\xi_{t,s}, J_{t,s})\}_{s\in[0,1]}$, relative to $U\times L\cup \Op(A)\times K$ and supported in $\Op(U)$, such that $\{(\xi_{t,1}, J_{t,1})\}_{t\in K}$ is a family of contact structures on $U$.
\end{theorem}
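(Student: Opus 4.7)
The plan is to deduce the theorem from Gromov's observation that contact structures form an open, $\Diff$-invariant differential relation, combined with the parametric, relative holonomic approximation theorem of Eliashberg--Mishachev.

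I would first recast the data. A contact structure on an open subset of $V$ is a holonomic section of the open, $\Diff$-invariant differential relation $\mathcal{R}_c \subset J^1$ cut out on pairs $(\alpha,\omega)$ of a $1$--form and a $2$--form by the non--degeneracy condition $\alpha \wedge \omega^n \neq 0$; an almost contact structure $(\xi,J)$ is precisely a formal section of $\mathcal{R}_c$, obtained by choosing a local defining form $\alpha$ for $\xi$ together with the $2$--form on $\xi$ determined by $J$. The hypothesis thus gives a continuous $K$--family of formal sections of $\mathcal{R}_c$ over $V$, already holonomic on $\Op(A) \times K \cup U \times L$, and the target is a homotopy of such sections, rel this locus and supported in $\Op(U)$, ending at a family holonomic on all of $U \times K$.

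Since $U$ is open it deformation retracts onto a CW--subcomplex $A' \subset U$ of positive codimension, and by cellular approximation I may arrange that $A \subset A'$. Applying the relative parametric holonomic approximation to the family of formal sections along $A'$ yields a homotopy of formal sections, fixed on $\Op(A) \times K \cup U \times L$ and supported in an arbitrarily thin tubular neighbourhood $\Op(A')$, ending at formal sections holonomic on $\Op(A')$. The $\Diff$--invariance of $\mathcal{R}_c$ then lets me spread holonomicity from $\Op(A')$ to $U$: choose a smooth family of isotopies $\varphi_{u,t}: V \to V$, compactly supported in $U$, equal to the identity on $\Op(A)$ and for $t \in L$, and satisfying $\varphi_{1,t}(U) \subset \Op(A')$ for $t$ outside a neighbourhood of $L$. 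Pulling back the new family by $\varphi_{u,t}$ produces a holonomic family on $U$ at $u=1$, and concatenating with the approximation homotopy gives the required deformation.

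The main obstacle is the parametric--relative bookkeeping across the frontier of $L$ in $K$: to continuously interpolate between the trivial homotopy at $t \in L$ and the full spreading isotopy for $t$ away from $L$, one must take the neighbourhood $\Op(A')$ to shrink appropriately with the cutoff so that intermediate pulled--back formal sections never leave the open relation $\mathcal{R}_c$. This is standard but delicate, and is precisely the content of the parametric relative h--principle for open $\Diff$--invariant relations on open manifolds.
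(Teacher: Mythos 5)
The paper does not actually prove this statement: it is imported verbatim from \cite{Gr2} and \cite{EM}, and your outline is the standard argument from those sources (contact structures as an open, $\Diff$--invariant relation on $1$--jets of $1$--forms, holonomic approximation along a positive--codimension core of the open set, then compression of $U$ into a thin neighbourhood of the core and pullback). So in terms of strategy there is nothing to fault, and nothing in the paper to compare against beyond the citation. Two corrections are needed, however, if your sketch is to function as a proof rather than as a pointer to \cite{EM}. The smaller one: holonomic approximation never yields holonomicity on $\Op(A')$ itself, only near a $C^0$--small wiggled copy $h^1(A')$ of the core; the compressing isotopy must therefore send $U$ into $\Op(h^1(A'))$. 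This is harmless but should be stated, since the wiggling is the whole point of the holonomic approximation theorem.

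The more serious issue is that your diagnosis of the delicate step is misplaced, and the fix you gesture at does not work as described. Intermediate pulled--back formal sections can never ``leave the open relation'': $\Diff$--invariance alone guarantees that the pullback of a formal (or genuine) solution by any diffeomorphism is again one, independently of how $\Op(A')$ is shrunk. The genuine difficulty sits at parameters $t$ near the frontier of $L$: if $\varphi_{1,t}$ interpolates between the identity (for $t\in L$, to keep the deformation relative to $U\times L$) and the full compression (for $t$ away from $L$), then for intermediate $t$ the image $\varphi_{1,t}(U)$ lies neither in the region where $(\xi_t,J_t)$ is known to be contact (that is only hypothesized for $t\in L$) nor in $\Op(h^1(A'))$, so holonomicity of the pulled--back family on all of $U$ at time one simply fails for those $t$; no choice of shrinking neighbourhoods repairs this. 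In \cite{EM,Gr2} the relative--in--parameter statement is obtained differently, via the fibered (parametric and relative) form of the holonomic approximation theorem together with an induction over the parameter, or by deducing relative statements for pairs from the weak homotopy equivalence between genuine and formal solutions. Your closing sentence defers precisely this point to ``the parametric relative h--principle for open $\Diff$--invariant relations on open manifolds'', which is the statement being proved; as a standalone argument the proposal is therefore circular at its only nontrivial step, although it is an accurate account of how the cited references proceed up to that point.
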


Observe that the dependence on the parameter can be supposed to be smooth since the condition for a contact structure is open (and thus preserved by small smoothing perturbations). Theorem \ref{thm:HolonomousRel} allows us to construct a foliated contact structure in a neighborhood of the 3--skeleton, this is explained in Subsection \ref{ssec:3sk}.

\subsection{Classification of overtwisted contact structures} \label{ssec:eliashberg}
There also exists a subclass of contact closed manifolds satisfying an existence h--principle. This is the main result in \cite{El}. Let us define this class. The disk $$\{z=0,\rho\leq\pi\}\subset(\R^3(z,r,\theta),\xi_{ot}=\ker\{\cos(r)dz+r\sin(r)d\theta\})$$
with the germ of the contact structure is the standard overtwisted disk. An almost contact 3--fold $(V^3,\xi)$ is said to be overtwisted if there exists an embedded disk $D$ such that $(\Op(D),\xi)$ is contactomorphic to the standard overtwisted disk. The class of overtwisted (almost) contact manifolds is flexible, i.e. it is classified by its homotopy data. The corresponding h--principle can be stated as follows.

\begin{theorem}$($\cite{El}$)$ \label{thm:otHolonomous}
Let $U$ be an open domain in a smooth manifold $V$, $K$ a compact space and $L\subset K$ a closed subset. Consider a continuous family $\{(\xi_t, J_t)\}_{t\in K}$ of almost contact structures on $V$ which are contact on $V$ for $t\in L$, contact on $U$ for $t\in K$ and there exists an overtwisted disk $\Delta\subset V\setminus U$ for all $\{(\xi_t, J_t)\}_{t\in K}$.\\

There exists a continuous deformation $\{(\xi_{t,s}, J_{t,s})\}_{s\in[0,1]}$, relative to $V\times L\cup U\times K$ such that $\{(\xi_{t,1}, J_{t,1})\}_{t\in K}$ is a family of contact structures on $V$.
\end{theorem}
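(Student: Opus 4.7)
The plan is to adapt Eliashberg's strategy from \cite{El} to the parametric relative form stated. The deformation is built in two stages: use Gromov's open $h$--principle to make the family genuinely contact outside a single $3$--ball $B_0$ containing the overtwisted disk $\Delta$, and then absorb the remaining topology across $B_0$ by an explicit model. Both stages must be carried out in families with the relativeness built in from the start, and the second is the technical core of the result.

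For the first stage I would choose a smooth triangulation $T$ of $V$ so fine that one top--dimensional simplex $B_0$ lies inside a fixed Darboux-type neighbourhood $\Op(\Delta)$ of the overtwisted disk and every simplex meeting $U$ is contained in $U$; by compactness of $K$ a single such triangulation can be chosen uniformly in the parameter. Let $A$ be the dual $1$--skeleton of $T$ restricted to $V\setminus (B_0\cup\Op(\Delta))$, a CW--complex of codimension $2$ in $V$. An auxiliary inductive application of the open $h$--principle on thickenings of $A$ puts the family in contact form on a neighbourhood of $A\cup\Op(\Delta)$ for every $t\in K$, keeping it unchanged over $U\times K$ and $V\times L$. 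Since $V\setminus B_0$ deformation retracts onto $U\cup A\cup\Op(\Delta)$, Theorem \ref{thm:HolonomousRel} then produces a homotopy, relative to $(V\times L)\cup(U\times K)\cup(\Op(\Delta)\times K)$, with endpoint a contact family on an open neighbourhood of $V\setminus B_0$.

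The remaining step is the absorption of $\Delta$ across $B_0$: fill the $3$--ball $B_0$ by a family of genuine contact structures extending the contact germ produced on $\partial B_0$ and matching the standard overtwisted model inside. I expect this to be the main obstacle of the proof, as it is the heart of the overtwisted classification. The plan is to put the contact germ of $\Delta$ into a parametric normal form modelled on $\xi_{ot}=\ker(\cos(r)dz+r\sin(r)d\theta)$, show that the space of such germs with a fixed overtwisted disk is weakly contractible, and use the rotation in $r$ to realise any prescribed formal homotopy class of extension on $\partial B_0$ by tuning the angular speed of the model. The parametric aspect here is what makes Eliashberg's argument delicate: one needs the normal form and the cancellation of formal obstructions to depend continuously on $t\in K$ and be stationary on $L$.

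Gluing the filling produced in the third step to the contact family from the second stage, along the common contact collar of $\partial B_0$, yields the required homotopy $\{(\xi_{t,s}, J_{t,s})\}_{s\in[0,1]}$ whose endpoint $(\xi_{t,1}, J_{t,1})$ is contact on all of $V$ for every $t\in K$, and which is stationary on $(V\times L)\cup(U\times K)$ by construction.
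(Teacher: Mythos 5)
The paper offers no proof of Theorem \ref{thm:otHolonomous}: it is quoted verbatim (in parametric, relative form) from Eliashberg's classification of overtwisted contact structures \cite{El}, and is used in Sections \ref{sec:proof} and \ref{sec:general} as a black box. Your proposal therefore undertakes to reprove Eliashberg's theorem itself. Your first stage --- reducing, via the open $h$--principle of Theorem \ref{thm:HolonomousRel}, to a family that is genuinely contact outside one ball $B_0$ adjacent to the overtwisted disk, relative to $U\times K$ and $V\times L$ --- is the standard reduction and is essentially sound, although two details are off: the claim that $V\setminus B_0$ deformation retracts onto $U\cup A\cup\Op(\Delta)$ is neither true in general nor needed (one applies the relative open $h$--principle directly to the open set $V\setminus B_0$), and keeping the codimension--$0$ set $U$ fixed for all $t\in K$ is not literally covered by the statement of Theorem \ref{thm:HolonomousRel} (whose relative set $A$ must have codimension at least $2$), though the genuinely relative form of Gromov's theorem does provide it.

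The genuine gap is the second stage, which is precisely the content of \cite{El}. Extending a $K$--family of almost contact structures over the ball $B_0$, relative to the contact germ on $\partial B_0$ and to $L$, using only the presence of a nearby overtwisted disk, is Eliashberg's extension lemma; its proof requires normalizing germs of contact structures along embedded $2$--spheres by controlling their characteristic foliations, an analysis of the resulting classes of extensions over the ball, and the comparison with the standard overtwisted model that cancels the formal obstruction --- all carried out parametrically. Your plan to ``put the germ into normal form'', ``show that the space of such germs with a fixed overtwisted disk is weakly contractible'' and ``tune the angular speed of the model'' names the desired conclusions rather than proving them; in particular the contractibility-type statement is of the same order of difficulty as the theorem itself, so nothing has been reduced. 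As written, the proposal is an outline of Eliashberg's strategy with its core step missing; for the purposes of this paper the correct treatment is the authors' one, namely citing \cite{El}.
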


Theorem \ref{thm:otHolonomous} is used in Sections \ref{sec:proof} and \ref{sec:general} in order to extend the foliated contact structure on a neighborhood of the 3--skeleton to the interior of the 4--cells.

\subsection{Topological Local Models}\label{ssec:adapt}

Consider a pair $(V^4,\SF)$. A cell $\sigma:\Delta\longrightarrow V$ is said to be linear with respect to $\SF$ if its image is contained in the image of a trivializing foliation chart for $\SF$ and the foliation $\sigma^*\SF$ is transverse to all the facets of $\Delta$.\\

In a linear cell the height function in the foliation chart yields a function in $\Delta$ with one maximum and one minimum in two vertices and no critical points elsewhere. See Figure \ref{fig:linear}.\\

\begin{figure}[ht]
\includegraphics[scale=0.3]{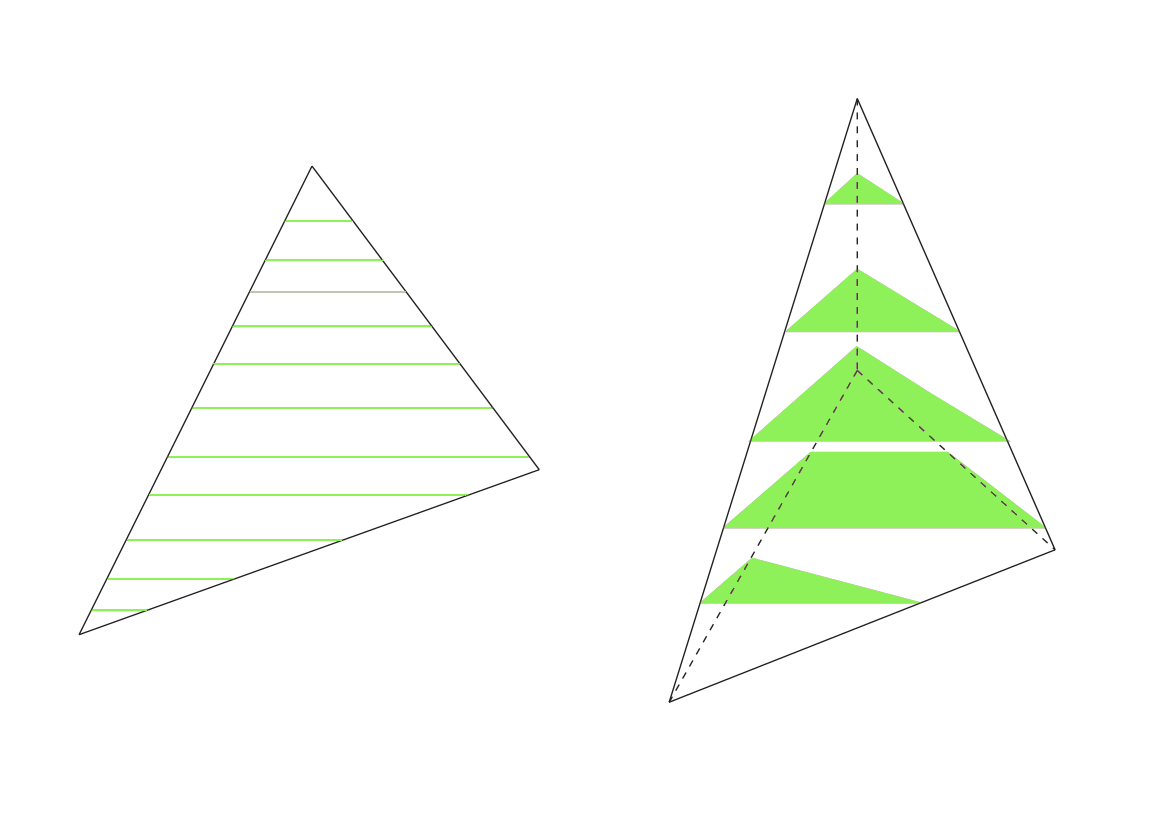}
\caption{A linear $2$--cell (left) and a linear $3$--cell (right) with their induced foliations (green).}\label{fig:linear}
\end{figure}

A triangulation $\ST$ of $V$ is adapted to the foliation $\SF$ if all its simplices are linear with respect to $\SF$. This corresponds to a distribution being in general position with respect to a triangulation. There always exists a triangulation on $V$ adapted to $\SF$, see \cite{Th}. The $i$--skeleton of $V$ with respect to this triangulation $\ST$ is denoted by $V^{(i)}$.\\

In order to use Theorems \ref{thm:HolonomousRel} and \ref{thm:otHolonomous} we require precise local models. The following lemma provides explicit models: these allow us to control the deformations that we obtain by applying the above h--principles (with the suitable parameter spaces $L\subset K$ in each occasion).\\

Consider the standard 3--ball $D^3_r\subset\mathbb{R}^3$ of radius $r$ and let us fix a sequence of equators $\mathbb{S}^0_r \subset \mathbb{S}^1_r \subset \mathbb{S}^2_r$ respectively bounding flat disks $D^1_r \subset D^2_r \subset D^3_r$. Given the interval $I=[0,1]$, the product foliation $\{\{t\}\times D^3_r\}_{t\in[0,1]}$ is denoted by $\SF_{I\times D^3_r}$. We can state the main lemma of this Subsection:

\begin{lemma} \label{lem:setupHolonomous}
Consider a triangulation $\ST$ adapted to $(V, \SF)$, an index $j \in \{0,\ldots,4\}$, a subset $G\subset V^{(j)}$ and a cell $\sigma\in V^{(j)}\setminus G$. There exists an embedding $\phi_\sigma = \phi: I \times D^3 \to V$ satisfying the following properties:\\
\begin{enumerate}
\item[a.] $\sigma \subset \im(\phi)$, $im(\phi)$ is contained in a small neighborhood of $\sigma$ and $\phi^*\SF=\SF_{I\times D^3}$.\\
\item[b.] There exist $\Op(V^{(j-1)}\cup G)\subset V$, $\Op(\partial I)\subset I$ and $\Op(\mathbb{S}^{j-2})\subset D^3$ such that\\
\begin{itemize}
\item[-] For $j=0$: $\phi^{-1}( \Op(G)) = \emptyset$,\\
\item[-] For $j=1$: $\phi^{-1}(\Op(V^{(0)}\cup G)) = \Op(\partial I) \times D^3$,\\
\item[-] For $j=2,3,4$: $\phi^{-1}(\Op(V^{(j-1)}\cup G)) = (\Op(\partial I) \times D^3) \cup (I \times \Op(\mathbb{S}^{j-2}))$.
\end{itemize}
\end{enumerate}
\end{lemma}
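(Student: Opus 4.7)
The idea is to construct $\phi$ explicitly inside a trivializing foliation chart for $\SF$ around $\sigma$ and then verify the conditions case by case on $j$. By adaptedness of $\ST$, the cell $\sigma$ sits inside a foliation chart $\psi: W \to (-\varepsilon, 1+\varepsilon) \times B^3$ (with $B^3$ an open $3$-ball) in which $\SF$ becomes the product foliation $\{t\} \times B^3$. By linearity of $\sigma$, the foliation coordinate $t|_\sigma$ attains a unique minimum and maximum at two distinguished vertices $v_{\min}, v_{\max}$ of $\sigma$, which we normalize to $t = 0$ and $t = 1$ respectively.

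The construction of $\phi$ proceeds leafwise. The slices $\sigma_t = \sigma \cap (\{t\} \times B^3)$ form a smooth $1$-parameter family of $(j{-}1)$-dimensional polytopes for $t \in (0,1)$, degenerating to the two vertex points at the endpoints. I would choose a smooth family of embeddings $g_t: D^{j-1}_r \hookrightarrow B^3$, $t \in [0,1]$, with $g_t(D^{j-1}_r) \supset \sigma_t$ as a concentric sub-disk and with $g_0, g_1$ collapsing to the points $\psi(v_{\min}), \psi(v_{\max})$; here $D^{j-1}_r \subset D^3_r$ is the standard equatorial disk. Thickening this leafwise family in the $(4-j)$ normal directions yields an embedding $\phi: I \times D^3 \to W$ which is automatically foliation-preserving. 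Shrinking $r$ and the $t$-range, $\im(\phi)$ becomes an arbitrarily small neighborhood of $\sigma$, giving property (a).

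Property (b) is then a matter of bookkeeping. For $j = 0$, shrinking $\im(\phi)$ avoids $G$ entirely. For $j = 1$, the cells of $V^{(0)} \cup G$ hitting $\im(\phi)$ are only $v_{\min}, v_{\max}$ together with the germs of incident edges and higher $G$-cells; their union pulls back to a neighborhood of $\{0,1\} \times \{0\}$, which we take to be $\Op(\partial I) \times D^3$. For $j = 2, 3, 4$ the cells of $V^{(j-1)} \cup G$ meeting $\im(\phi)$ fall into two groups: (i) cells incident to one of the extremal vertices, contributing to a neighborhood of $\partial I \times D^3$; and (ii) the remaining $(j{-}1)$-faces of $\sigma$ together with incident $G$-cells, which lie near the equatorial side $\{(t, g_t(x)) : t \in I,\, x \in \partial D^{j-1}_r\}$. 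By arranging $g_t(\partial D^{j-1}_r)$ to be concentric with the model equator $\mathbb{S}^{j-2}_r \subset D^3_r$ for all $t$, this second group sits inside $I \times \Op(\mathbb{S}^{j-2})$ for a suitable choice of $\Op$. Taking the union of both contributions and tuning the three neighborhoods in $V$, $I$, and $D^3$ simultaneously yields the claimed equality.

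The principal obstacle is the simultaneous matching in cases $j = 2,3,4$: one must choose the family $g_t$ so that the slice boundaries $g_t(\partial D^{j-1}_r)$ track the fixed model equator $\mathbb{S}^{j-2}_r$ throughout the open interval $(0,1)$, while also collapsing smoothly to the extremal vertices as $t \to 0, 1$. This is resolved by interpolating explicitly between a concentric family on the interior of $I$ and a radial contraction near the endpoints, possibly composed with a leafwise isotopy of $D^3$ to straighten $g_t(\partial D^{j-1}_r)$ onto $\mathbb{S}^{j-2}_r$; the overlap of the cap and side regions near $t = 0, 1$ is harmless since the decomposition in (b) is a union.
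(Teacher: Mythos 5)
Your overall strategy is the same as the paper's: work in a trivializing foliation chart around the linear cell $\sigma$, use the height function with its single minimum and maximum, build a foliated product chart $I\times D^3$ around $\sigma$ by thickening a $j$--dimensional slab in the $4-j$ leafwise normal directions, and then obtain property (b) by bookkeeping of which simplices of $V^{(j-1)}\cup G$ can meet the (thin) image. However, there is a concrete flaw in your construction of the slab for $j=2,3,4$: you ask the family $g_t\colon D^{j-1}_r\hookrightarrow B^3$ to \emph{collapse to the points} $\psi(v_{\min}),\psi(v_{\max})$ at $t=0,1$. If the end maps degenerate to points, the resulting map on $I\times D^3$ is not injective on the slices $\{0\}\times D^3$ and $\{1\}\times D^3$ (all points with the same normal coordinate are sent to the same image point), so $\phi$ is not an embedding and the condition $\phi^*\SF=\SF_{I\times D^3}$ is meaningless there; moreover $\{g_t\}$ is then not a smooth family of embeddings, so the ``interpolation with a radial contraction near the endpoints'' you invoke cannot produce the required product chart.

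The fix is exactly what the paper does: do not let the slices degenerate. Extend $\sigma$ to a slightly larger open $j$--disk which, after a small isotopy, is foliated \emph{horizontally as a product} $I\times D^{j-1}$, so that every horizontal slice --- including those at the extremal heights, where $\sigma_t$ has shrunk to a vertex --- is an honest $(j-1)$--disk of positive size containing $\sigma_t$, with the extremal vertices lying in the interior of the end slices and $\phi^{-1}(\partial\sigma)$ pushed arbitrarily close to $\partial I\times D^{j-1}\cup I\times\mathbb{S}^{j-2}$. Thickening this disk by a leafwise normal frame (exponential map, rescaled) then gives a genuine embedding $I\times D^3\to V$, and your case-by-case identification of $\Op(\partial I)\times D^3$ with the cells incident to the extremal vertices and of $I\times\Op(\mathbb{S}^{j-2})$ with the remaining facets of $\partial\sigma$ and incident $G$--cells goes through as you describe (this is the same bookkeeping as in the paper, which also shrinks the end caps by taking small exponential times near the extrema so that $\Op(\partial D^j)$ has the required split form). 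With that correction your argument matches the paper's proof.
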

See Figure \ref{fig:Lemma29} for a schematic representation of the statement.

\begin{figure}[ht]
\includegraphics[scale=0.2]{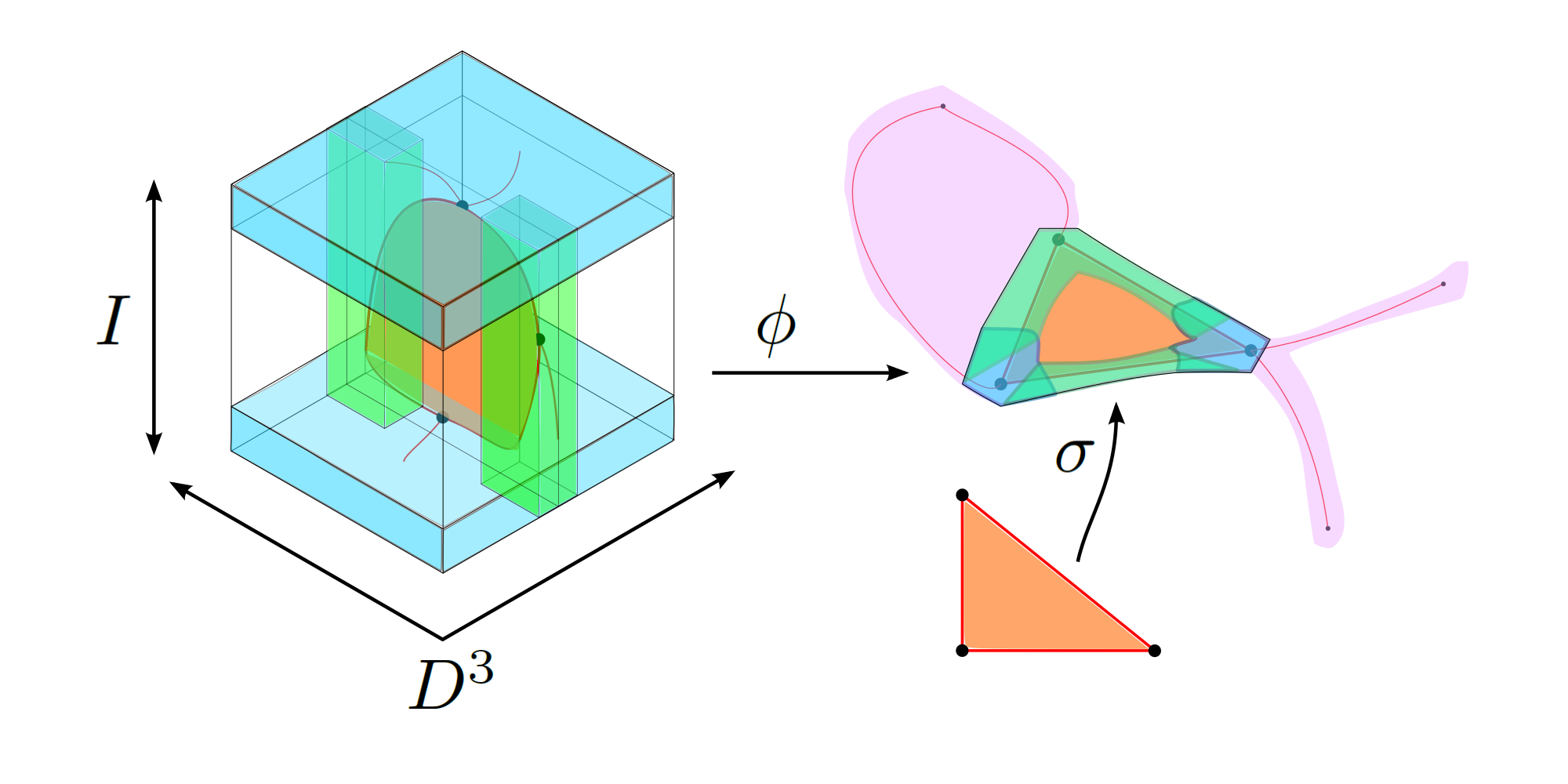}
\caption{Statement of Lemma \ref{lem:setupHolonomous} for the case $j=2$. The figure on the left depicts the local model and the one on the right a neighbourhood of its image in the manifold. The simplex $\sigma$ is depicted in orange, with edges in red. The neighbourhood $\Op(V^{(1)} \cup G)$ is colored in pink, and in this example it covers a whole simplex $\tau \in G$, to the left of $\sigma$, and two edges connected to the rightmost vertex of $\sigma$. The subsets $\Op(\partial I) \times D^3$ (blue) and $I \times \Op(\mathbb{S}^0)$ (green) cover the intersection of $\Op(V^{(1)} \cup G)$ with the image of the local model. \label{fig:Lemma29}}
\end{figure}

\begin{proof}
Consider an embedding $i:D^j\longrightarrow V$ of an open $j$--disk extending $\sigma$ such that $i^{-1}(\partial\sigma)$ is arbitrarily close to $\partial D^j$. Since the triangulation $\ST$ is adapted to $\SF$, after a small isotopy, we can suppose that $i^*\SF$ foliates the disk $D^j$ horizontally. In order to construct the embedding $\phi$ we use a normal frame along $i(D^j)$ which is contained in $\SF$. Then the exponential map (and rescaling) yields an embedding
$$\phi: D^j \times D^{4-j}\longrightarrow V,\quad\phi|_{D^j \times \{0\}}=i$$
such that $\sigma \subset \im(\phi)$, with $\im(\phi)$ arbitrarily close to $\sigma$. This map can be understood as an embedding of $I \times D^3$ and it satisfies $\phi^* \SF = \SF_{I\times D^3}$. This is a foliation chart and for $j=0$ the statement follows.\\

Suppose that $j\neq0$, let us detail the neighborhoods appearing in the statement. By construction $\phi(I \times D^3)$ is $C^0$--close to $\sigma$, hence any other simplex $\tau$ intersecting $\phi(I \times D^3)$ shares a facet with $\sigma$. In particular, there exists a neighborhood $\Op(\tau)$ such that $\Op(\tau) \cap \phi(I \times D^3) \subset \Op(\partial\sigma)$. Then $\Op(V^{(j-1)}\cup G)$ and $\Op(\partial\sigma)$ can be chosen so that $\Op(V^{(j-1)}\cup G) \cap \Op(\sigma) = \phi(\Op(\partial\sigma))$.  \\

Note now that the preimage of the boundary $\phi^{-1}(\partial\sigma)$ is arbitrarily close to $\partial D^j\times\{0\} = \partial I \times D^{j-1} \cup I \times \mathbb{S}^{j-2}$ and thus we may suppose $\phi^{-1}(\Op(\partial\sigma)) = \Op(\partial D^j \times \{0\})$. By taking very small times for the exponential map close to the minimum and maximum of the (height function of the) foliation in $\sigma$, it can be assumed that $\phi(\partial I \times D^3)$ is an arbitrarily small neighbourhood of the minimum and maximum. Then $\Op(\partial D^j)$ can be taken to be of the special form $(\partial I \times D^3) \cup (I \times \Op(\mathbb{S}^{j-2}))$, proving the claim.
\end{proof}
  
This a strictly topological result, not related to contact topology. Lemma \ref{lem:setupHolonomous} is used to apply the h--Principles in Sections \ref{ssec:hLemma} and \ref{ssec:eliashberg} with a controlled choice of spaces $L\subset K$. In particular, it is needed in Proposition \ref{prop:0123skeleton} (foliated contact structure on the 3--skeleton) and the argument on the 4--skeleton in Theorem \ref{thm:main}.\\

Theorem \ref{thm:main} holds for any pair $(V^4,\SF)$. It is however shorter (and illustrative) to consider the case of a taut foliation $\SF$. This hypothesis simplifies the argument and Theorem \ref{thm:main} is first proved in Section \ref{sec:proof} for this case. Let us state two technical brief lemmas that we use in the proof.

\begin{lemma} \label{lem:constructionGamma}
Let $\SF$ be a taut foliation on $V$. Consider a 4--cell $\sigma\in V^{(4)}$ and a map $\phi: I \times D^3 \longrightarrow V$ provided by Lemma \ref{lem:setupHolonomous}. There exists a map $\gamma_\sigma:[0,2]_{/\{0 \backsim 2\}} = \mathbb{S}^1\longrightarrow V$ transverse to $\SF$ such that:\\
\begin{itemize}
\item[-] $\gamma_\sigma(t) = \phi(t,0)$ for $t \in I$,\\

\item[-] $\gamma_\sigma(t) \in \Op(V^{(3)})$ for $t \in \mathbb{S}^1 \setminus I$.
\end{itemize}
\end{lemma}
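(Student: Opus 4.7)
Set $\alpha(t) := \phi(t,0)$ for $t \in I$. Since $\phi^*\SF = \SF_{I \times D^3}$, the arc $\alpha$ is positively transverse to $\SF$ in the coorientation direction. Lemma \ref{lem:setupHolonomous} with $j=4$ gives $\phi^{-1}(\Op(V^{(3)})) \supseteq \Op(\partial I) \times D^3$, so both endpoints $\phi(0,0)$ and $\phi(1,0)$ already lie in $\Op(V^{(3)})$. The plan is to close $\alpha$ by a positively transverse smooth arc $\delta : [1,2] \to \Op(V^{(3)})$ from $\phi(1,0)$ back to $\phi(0,0)$, matched in tangent with $\alpha$ at the joining points, and to define $\gamma_\sigma$ as the concatenation.

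The crucial input is tautness. By geometric tautness of $\SF$ on closed $V$, there exists a nowhere-vanishing vector field $X \in \mathfrak{X}(V)$ positively transverse to $\SF$ and preserving a volume form; by the standard Poincar\'e recurrence plus local close-up argument, every point of $V$ lies on a closed positively transverse loop. I claim that any two points of the connected open set $\Op(V^{(3)})$ can be joined by a piecewise smooth positively transverse arc contained in $\Op(V^{(3)})$. Fixing $p \in \Op(V^{(3)})$, the set $T(p)$ of such endpoints is open: in a foliated chart $U \subset \Op(V^{(3)})$ around any $q \in T(p)$, one extends the incoming arc directly to points of $U$ with transverse coordinate larger than $q$'s, and reaches points with smaller transverse coordinate by first traversing a closed positively transverse loop through $q$, which can be perturbed by general position into $\Op(V^{(3)})$ since it is $1$-dimensional while the complement of $\Op(V^{(3)})$ consists of small open balls in the $4$-cell interiors. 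A symmetric argument shows $T(p)$ is closed in $\Op(V^{(3)})$, so $T(p) = \Op(V^{(3)})$ by connectedness (which holds because $V^{(3)}$ contains the connected $1$-skeleton of the triangulation).

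Applying the claim to $\phi(1,0)$ and $\phi(0,0)$ produces a piecewise smooth $\delta$ in $\Op(V^{(3)})$. Its finitely many corners are smoothed using the openness of positive transversality and the convexity of the positively transverse cone in each tangent space, so the smoothing stays inside $\Op(V^{(3)})$. A final $C^1$-small modification of $\delta$ near $t = 1, 2$ matches the tangents there with $\alpha'(1)$ and $\alpha'(0)$, yielding the required smooth transverse loop $\gamma_\sigma : \mathbb{S}^1 \to V$.

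The main obstacle is the connectivity claim of the second paragraph: unlike general transverse arcs, positively transverse arcs cannot locally ``reverse'' direction along the coorientation, and tautness is essential to bypass this rigidity via closed transverse loops, while a general-position perturbation is needed to keep those loops inside the neighborhood $\Op(V^{(3)})$.
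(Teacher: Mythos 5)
Your overall route is the same as the paper's (the paper simply asserts that tautness plus connectedness gives a positively transverse path from $\phi(1,0)$ to $\phi(0,0)$, and that this path ``can be assumed to lie in a neighbourhood of $V^{(3)}$ up to isotopy''), and your open--closed reachability argument is a reasonable way to flesh out the connectivity statement the paper leaves implicit. However, there is a genuine gap at the one point where all the geometric content sits: you claim that the closed positively transverse loops (and hence the connecting arc) ``can be perturbed by general position into $\Op(V^{(3)})$ since it is $1$--dimensional while the complement of $\Op(V^{(3)})$ consists of small open balls in the $4$--cell interiors.'' This is not a general position statement. The complement of $\Op(V^{(3)})$ is a union of codimension--$0$ regions (slightly shrunk closed $4$--balls, one in each $4$--cell); a generic curve in a $4$--manifold avoids sets of dimension at most $2$, not $4$--dimensional sets, and indeed any transverse loop guaranteed by tautness may well run straight through the middle of many $4$--cells. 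So as written the key step fails, and with it the claim that $T(p)\subset\Op(V^{(3)})$ is open.

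The correct mechanism is an isotopy through transverse curves, not a perturbation: because the triangulation is adapted to $\SF$, each $4$--cell $\tau$ is linear, so in its foliation chart $\SF$ is the horizontal product foliation and a positively transverse arc crossing $\tau$ is strictly monotone in the height coordinate. One can therefore push the portion of the curve inside $\tau$ in the leaf (horizontal) directions toward $\partial\tau\subset V^{(3)}$, keeping the height parametrization and hence transversality, until it lies in $\Op(\partial\tau)\subset\Op(V^{(3)})$; near the top and bottom of the cell the horizontal slices are small, so no special care is needed there. Doing this cell by cell (and keeping the loops based at, or near, the relevant points) repairs your openness step and recovers the paper's assertion. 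With that replacement, the rest of your argument --- endpoints of $\phi(\cdot,0)$ already in $\Op(V^{(3)})$ by Lemma \ref{lem:setupHolonomous}, concatenation, and smoothing of corners using the openness and convexity of the positively transverse cone --- is fine.
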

\begin{proof}
Since the foliation $\SF$ is taut and $V$ connected, there exists a positively transverse path
$$ l:[1,2]\longrightarrow V,\quad l(1)=\phi(1,0),\quad l(2)=\phi(0,0).$$
The path $l$ can be assumed to lie in (a neighborhood of) the 3--skeleton $V^{(3)}$ up to isotopy. We can concatenate $\phi(t,0)$ for $t \in (0,1)$ and $l(t)$ for $t \in [1,2]$, and smooth the resulting map in order to obtain the required map $\gamma_\sigma$.
\end{proof}

A generic choice of maps $\phi_\sigma$ implies that: 

\begin{corollary} \label{cor:curves}
Given a finite collection $G\subset V^{(4)}$ there exists a collection $\{\gamma_\sigma\}_{\sigma\in G}$ of pairwise disjoint paths satisfying the properties of Lemma \ref{lem:constructionGamma}.
\end{corollary}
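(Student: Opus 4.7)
The plan is to upgrade the construction in Lemma \ref{lem:constructionGamma} by performing the choices for different $\sigma\in G$ in a sufficiently generic way, using that two $1$--dimensional submanifolds in a $4$--manifold can be made disjoint by a generic $C^0$--small perturbation (the expected intersection has negative dimension $1+1-4=-2$), and that transversality of $\gamma_\sigma$ to $\SF$ is an open condition preserved by small perturbations.

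First I would arrange the ``core'' portions of the $\gamma_\sigma$ to be disjoint. Recall from the proof of Lemma \ref{lem:setupHolonomous} that the embedding $\phi_\sigma:I\times D^3\to V$ has image $C^0$--close to $\sigma$, and $\phi_\sigma(I\times\{0\})$ is an arc inside the $4$--cell $\sigma$. Since distinct open $4$--cells are disjoint, the arcs $\{\phi_\sigma(I\times\{0\})\}_{\sigma\in G}$ are automatically pairwise disjoint (taking the tubular neighborhoods from the exponential map thin enough so that this core arc is contained in the interior of $\sigma$).

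Next I would construct the completions $l_\sigma:[1,2]\to V$ one at a time, inductively over an enumeration $\sigma_1,\dots,\sigma_N$ of $G$. Having constructed pairwise disjoint loops $\gamma_{\sigma_1},\dots,\gamma_{\sigma_{k-1}}$ satisfying the conclusions of Lemma \ref{lem:constructionGamma}, I would pick any positively transverse path $l_{\sigma_k}$ from $\phi_{\sigma_k}(1,0)$ to $\phi_{\sigma_k}(0,0)$ lying in $\Op(V^{(3)})$, as produced by the tautness of $\SF$ in that lemma. Then I would perturb $l_{\sigma_k}$ inside $\Op(V^{(3)})$, keeping its endpoints fixed, so that it is disjoint from the previously constructed curves $\gamma_{\sigma_1},\dots,\gamma_{\sigma_{k-1}}$ and from the cores $\phi_{\sigma_j}(I\times\{0\})$ for $j\neq k$. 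The obstruction to doing this is a finite union of $1$--dimensional submanifolds inside the $4$--dimensional open set $\Op(V^{(3)})\cup\bigcup_{j\neq k}\im(\phi_{\sigma_j})$, so a generic $C^\infty$--small perturbation of $l_{\sigma_k}$ rel endpoints achieves disjointness. Because positive transversality to $\SF$ is open in the $C^1$--topology, such a small perturbation preserves it. Concatenating with $\phi_{\sigma_k}(\cdot,0)$ and smoothing then gives the required loop $\gamma_{\sigma_k}$.

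The only step that needs care is ensuring that the perturbation of $l_{\sigma_k}$ stays inside $\Op(V^{(3)})$ while also avoiding the previously built loops. This is not an obstacle because $\Op(V^{(3)})$ is an open $4$--dimensional subset and the curves to be avoided are $1$--dimensional, so the complement is open and dense in $\Op(V^{(3)})$ relative to paths with the prescribed endpoints; standard transversality arguments (e.g.\ the jet transversality theorem applied to the evaluation map) guarantee that a generic path avoiding the finite union exists in every $C^\infty$--neighborhood of the initial $l_{\sigma_k}$. Iterating over $k=1,\dots,N$ produces the required pairwise disjoint family $\{\gamma_\sigma\}_{\sigma\in G}$.
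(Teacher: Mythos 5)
Your argument is correct and is exactly what the paper intends: the corollary is stated there with only the remark that ``a generic choice of maps $\phi_\sigma$'' gives the disjointness, i.e.\ the same general position count $1+1<4$ together with openness of transversality to $\SF$ that you spell out. Your inductive perturbation of the return segments rel endpoints is a valid way of making that genericity precise.
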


Both Lemma \ref{lem:constructionGamma} and Corollary \ref{cor:curves} are used in Section \ref{sec:proof} to conclude Theorem \ref{thm:main} in the case of a taut foliation. Their analogues for the general case of Theorem \ref{thm:main} are stated in Section \ref{sec:general}.

\subsection{3--skeleton}\label{ssec:3sk}

The argument for Theorem \ref{thm:main} is constructive on the cells of a triangulation of $V$. It begins with the lower dimensional skeleta and concludes with the construction of a foliated contact structure on the cells of $V^{(4)}$. This is a commonly used strategy for these flexibility results, for instance \cite{El}. It is often the case that the deformation required in the skeleta below the top--dimensional strata can be easily achieved.

\begin{proposition} \label{prop:0123skeleton}
Let $(\xi,J)$ be a foliated almost contact structure on $(V,\SF)$. There exists a homotopy $\{(\xi_t, J_t)\}$ of foliated almost contact structures such that $(\xi_0, J_0)=(\xi, J)$ and $(\xi_1,J_1)$ is a contact foliation on $V^{(3)}$.
\end{proposition}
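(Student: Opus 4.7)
The plan is to induct on the skeleton dimension $j = 0, 1, 2, 3$, extending at each stage the contact foliation from an open neighborhood of $V^{(j-1)}$ to one of $V^{(j)}$. Each stage will be carried out by applying the parametric relative h-principle of Theorem~\ref{thm:HolonomousRel} leafwise inside the local models furnished by Lemma~\ref{lem:setupHolonomous}, treating the transverse coordinate $I$ of those models as the h-principle parameter.

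First, I would fix an adapted triangulation $\ST$ of $(V, \SF)$ and set $V^{(-1)} = \emptyset$. Suppose inductively that after finitely many homotopies the current foliated almost contact structure $(\xi, J)$ is already a contact foliation on an open neighborhood $U_{j-1}$ of $V^{(j-1)}$. I would then enumerate the $j$-cells $\sigma_1, \sigma_2, \ldots$ and process them one at a time. At step $k$ I would apply Lemma~\ref{lem:setupHolonomous} with $G = \{\sigma_1, \ldots, \sigma_{k-1}\}$ to obtain an embedding $\phi = \phi_{\sigma_k}: I \times D^3 \hookrightarrow V$ whose image contains $\sigma_k$, satisfies $\phi^*\SF = \SF_{I \times D^3}$, and meets the already-contact region $U_{j-1} \cup \Op(G)$ in exactly the set described by part (b) of the lemma.

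Inside this local model I would apply Theorem~\ref{thm:HolonomousRel} to the leaf $D^3$, taking parameter space $K = I$, closed subset $L = \partial I$, open domain $U = D^3$, and (for $j \geq 2$) CW-subcomplex $A = \mathbb{S}^{j-2} \subset D^3$; for $j = 0, 1$ one sets $A = \emptyset$, and for $j = 0$ also $L = \emptyset$. The codimension of $A$ in the leaf $D^3$ equals $5 - j \geq 2$, matching the hypothesis, and part (b) of Lemma~\ref{lem:setupHolonomous} translates exactly into the statement that $(\phi^*\xi, \phi^*J)$ is contact on all of $U$ for $t$ near $L$ and contact near $A$ for all $t \in K$. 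The h-principle then produces a homotopy, relative to this already-contact region and supported in a small neighborhood of $\phi(I \times D^3)$, that terminates at a foliated almost contact structure which is genuinely contact on $\phi(I \times D^3) \supset \sigma_k$. Pushing forward by $\phi$ and extending by the identity gives a homotopy in $\SA(V, \SF)$; concatenating all such homotopies over $k = 1, 2, \ldots$ and then over $j = 0, 1, 2, 3$ yields the single homotopy required by the statement.

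The principal obstacle I expect is the bookkeeping involved in ensuring that each local application of the h-principle is truly relative to the contact region already built up near $U_{j-1} \cup \sigma_1 \cup \cdots \cup \sigma_{k-1}$, and that the deformation on $\phi_{\sigma_k}$ does not spill out and damage neighbouring cells. Both issues are precisely what part (b) of Lemma~\ref{lem:setupHolonomous} is designed to handle, since it identifies the pre-existing contact preimage inside $I \times D^3$ with the data $U \times L \cup \Op(A) \times K$ that Theorem~\ref{thm:HolonomousRel} treats as fixed, and since the ``supported in $\Op(U)$'' clause of the h-principle keeps the deformation confined to a small neighbourhood of $\phi_{\sigma_k}(I \times D^3)$. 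Once that identification is made, the inductive concatenation goes through without further effort.
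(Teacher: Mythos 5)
Your overall strategy---skeleton-by-skeleton induction, using the local models of Lemma \ref{lem:setupHolonomous} with $G$ the previously handled cells, and applying Theorem \ref{thm:HolonomousRel} leafwise with the transverse coordinate $I$ as the parameter $K$---is exactly the paper's argument. However, your specific choices $U = D^3$ and $L = \partial I$ break the step where you ``push forward by $\phi$ and extend by the identity.'' With $U$ equal to the whole model leaf $D^3$, the clause ``supported in $\Op(U)$'' of Theorem \ref{thm:HolonomousRel} gives no confinement at all (the ambient manifold in that application \emph{is} $D^3$), so the homotopy it produces may be nontrivial arbitrarily close to the lateral boundary $I \times \partial D^3$ of the chart; extending it by the constant homotopy outside $\phi(I\times D^3)$ is then discontinuous across $\phi(I\times\partial D^3)$. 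Note also that your reading of the support clause is reversed: what you need is not that the deformation stays inside ``a small neighbourhood of $\phi(I\times D^3)$'' (a set larger than the chart), but that it dies out \emph{strictly inside} the chart, away from its boundary. Similarly, taking $L=\partial I$ only freezes the deformation at the two end slices $t\in\{0,1\}$, not on a neighbourhood of them, which is what one wants both for a clean gluing in the $t$--direction and to leave the previously constructed contact region genuinely untouched.

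The paper's parameter choices are designed precisely to repair this: one restricts the embedding to $I\times D^3_{1-\delta}$ (still containing $\sigma$ and still satisfying Lemma \ref{lem:setupHolonomous}), and applies Theorem \ref{thm:HolonomousRel} with $U=D^3_{1-\delta}$ (respectively $U=D^3_{0.5}$ for the $0$--cells), $A=\mathbb{S}^{j-2}_{1-\delta}$ and $L=\Op(\partial I)$. Then $\Op(U)$ can be taken relatively compact in $D^3$, so the deformation is constant near $I\times\partial D^3$, and relativity over $U\times\Op(\partial I)\cup\Op(A)\times I$ makes it constant near the top and bottom slices as well; only with this confinement does the extension by the constant homotopy to the rest of $V$ produce a genuine global homotopy in $\SA(V,\SF)$, and only then is the new structure contact on a neighbourhood of $\sigma$ together with (a slightly shrunk) neighbourhood of the previously treated skeleton, so that the induction can proceed. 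With these corrections your argument coincides with the one in the paper.
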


\begin{proof}
Consider the 0--skeleton $V^{(0)}$. Lemma \ref{lem:setupHolonomous} gives disjoint foliation charts $\phi:I \times D^3\longrightarrow\Op(\sigma)$ near each $0$--simplex $\sigma$. The foliated almost contact structure $(\phi^*\xi_0, \phi^*J_0)$ can be considered as a 1--parametric family $(\xi_t, J_t)$ of almost contact structures in the disc $D^3$. Then we can apply Theorem \ref{thm:HolonomousRel} with $K=I$, $V=D^3$, $U=D^3_{0.5}$, $L=\emptyset$ and $A=\emptyset$ and obtain a foliated contact structure in a neighbourhood of the 0--skeleton.\\

We fix an index $j\in\{1,2,3\}$ and apply induction on the set of simplices of the $j$--skeleton $V^{(j)}$. Given a simplex $\sigma$, Lemma \ref{lem:setupHolonomous} provides an embedding $\phi:I \times D^3\longrightarrow\Op(\sigma)$ and we can consider a small $\delta>0$ and an embedding $\phi:I\times D^3_{1-\delta}\longrightarrow V$ which also conforms the properties of Lemma \ref{lem:setupHolonomous}. Then Theorem \ref{thm:HolonomousRel} applies with $K=I$, $V=D^3$, $U=D^3_{1-\delta}$, $L=\Op(\partial I)$, $A=\mathbb{S}^{j-2}_{1-\delta}$. This inductive procedure constructing the contact foliated structure (relative to the previous step) applied to the 1, 2 and 3--skeleton implies the statement.\\
\end{proof}

In the next two sections we prove the theorem. First, it is concluded for the case of a taut foliation $\SF$ in Section \ref{sec:proof}. Then Section \ref{sec:general} shortly adapts Section \ref{ssec:adapt} and proves the general result stated in Section \ref{sec:intro}.

\section{Taut Case}\label{sec:proof}

In this section we prove Theorem \ref{thm:main} in the case that the foliation $\SF$ is taut. We fix a triangulation $\ST$ of $V$ adapted to the taut foliation $\SF$. Given a foliated almost contact structure $(\xi,J)$, its associated pair is denoted by $(\beta,\alpha)$.\\

The proof consists of two steps. The first step, Proposition \ref{lem:localModelCurve}, provides an appropriate normal form for the foliated almost contact structure (or rather its defining form) in a neighborhood of a 4--cell. The second step, Proposition \ref{prop:4skeleton}, uses this local model in order to insert a foliated family of overtwisted disks.\\

In order to obtain a normal form for the defining form of the foliated almost contact structure we choose a convenient trivialization. This method has been used in the contact setup, see \cite{CP}. The following Proposition adapts the technique to the foliated framework.

\begin{proposition}\label{lem:localModelCurve}
Suppose $(\xi, J)$ is a foliated contact structure on $\Op(V^{(3)})$ and consider the set of maps $\{\gamma: \mathbb{S}^1\longrightarrow V\}$ given by Corollary \ref{cor:curves} applied to $G=V^{(4)}$. For each such $\gamma$, there exists an embedding 
$$\phi_\gamma: \mathbb{S}^1 \times D^3_R\longrightarrow V$$
for some radius $R\in\R^+$ such that $\phi(\cdot,0) = \gamma(\cdot)$ and satisfying the following properties:
\begin{enumerate}
\item[a.] $\phi^* \SF = \SF_{\mathbb{S}^1 \times D}$,
\item[b.] $\phi^{-1}(\Op(V^{(3)})$ is saturated by leaves,
\item[c.] $\phi^*\xi = \ker(\widetilde\alpha) \cap \phi^* \SF$, where $\widetilde\alpha = dz + f(t;z,r,\theta) d\theta$ and $f(t;0) = 0$.
\end{enumerate}
Also, there exists a small $\delta > 0$ such that the bounds
\begin{equation} \label{eq:boundsf}
	\partial_rf(t;r,z,\theta) > 2\delta r, \qquad  f(t;r,z,\theta) > \delta r^2, 
\end{equation}
hold for all $t$ such that $\gamma(t) \in \Op(V^{(3)})$.
\end{proposition}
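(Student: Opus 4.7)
The plan is to construct $\phi_\gamma$ in two stages: first produce a foliation chart around $\gamma$ with the required leaf-saturation property, then normalize the distribution via a parametric contact Darboux argument.

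For the first stage, since $\gamma$ is positively transverse to $\SF$ by Lemma \ref{lem:constructionGamma}, the pullback bundle $\gamma^*T\SF$ is a rank-three bundle over $\mathbb{S}^1$ and admits a smooth trivialization $(e_1, e_2, e_3)$. I would further arrange, by a small perturbation (open conditions), that $e_1(t)$ is transverse to $\xi_{\gamma(t)}$ while $(e_2(t), e_3(t))$ span $\xi_{\gamma(t)}$; this uses only that $\xi_{\gamma(t)}$ is a codimension-one subspace of $T_{\gamma(t)}\SF$. Define $\phi_0: \mathbb{S}^1 \times D^3_R \to V$ by the leafwise exponential map, $\phi_0(t,(z,x,y)) = \exp^{\SL_{\gamma(t)}}_{\gamma(t)}\bigl(z e_1(t) + x e_2(t) + y e_3(t)\bigr)$, which is an embedding for $R$ small and sends each disk $\{t\} \times D^3_R$ into a single leaf. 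This gives (a). For (b), use that $\Op(V^{(3)})$ is open and $\mathbb{S}^1$ compact: by shrinking both $\Op(V^{(3)})$ and $R$, one guarantees $\phi_0(\{t\} \times D^3_R) \subset \Op(V^{(3)})$ whenever $\gamma(t) \in \Op(V^{(3)})$.

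For the second stage, work in the coordinates $(t,z,x,y)$ on $\mathbb{S}^1 \times D^3_R$. By choice of frame, $\phi_0^*\xi$ is a leafwise $2$-plane distribution tangent to $\ker(dt)$ and equal to $\ker(dz)$ along the axis $\{x = y = z = 0\}$. On each leaf it is defined by a $1$-form $\alpha_t = dz + a_t(x,y,z)\,dx + b_t(x,y,z)\,dy$ with $a_t(0,0,z) = b_t(0,0,z) = 0$. To remove the radial part of $a_t\,dx + b_t\,dy$, apply a leaf-preserving $z$-shear $\Psi:(t,z,x,y) \mapsto (t, z - u(t;x,y,z), x, y)$, where $u$ is obtained by integrating the ODE $\partial_r u = a_t\cos\theta + b_t\sin\theta$ in $r$ (with $u|_{r=0}=0$) and inverting via the implicit function theorem, valid since $\partial_z u$ vanishes on the axis. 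The pulled-back form is then $dz + f(t;r,z,\theta)\,d\theta$ with $f(t;0) = 0$, establishing (c).

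For the bounds, restrict to the open set $U = \{t : \gamma(t) \in \Op(V^{(3)})\}$, where $\xi$ is already contact on each leaf. A direct computation yields $\alpha_t \wedge d\alpha_t = f_r\, dz \wedge dr \wedge d\theta$, so the contact condition forces $f_r > 0$; combined with $f|_{r=0} = 0$ and the normalization $\xi = \ker(dz)$ along $\gamma$, one deduces that $f$ vanishes to order two in $r$ at the axis and $f/r^2$ extends smoothly and positively. A parametric Darboux normalization along leaves over $U$ (a foliated Moser-type argument of the style used in Lemma \ref{lem:gray}) then allows us to further normalize $f$ to $r^2$ on $U$, at which point the inequalities $\partial_r f > 2\delta r$ and $f > \delta r^2$ hold with $\delta = 1/2$. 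The main technical obstacle is executing this last step while preserving the product foliation structure and the core curve $\gamma$ pointwise: the Moser vector field must be kept tangent to $\phi_0^*\SF$ and vanishing on the axis, which constrains the interpolation between the standard and the initial contact form on each leaf.
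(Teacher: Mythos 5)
Your construction of the embedding and of the normal form $dz+f\,d\theta$ follows essentially the same route as the paper: a trivialization of $T\SF$ along the transverse curve $\gamma$, flow-box/exponential coordinates giving $\phi^*\SF=\SF_{\mathbb{S}^1\times D^3}$, and then a leafwise straightening that removes the $dr$--component of the defining form. Two remarks on that last step. First, as written your shear ODE $\partial_r u=a_t\cos\theta+b_t\sin\theta$ freezes $z$, while $a_t,b_t$ depend on $z$; to cancel the $dr$--term exactly you must solve $\partial_r u=a_t(x,y,z-u)\cos\theta+b_t(x,y,z-u)\sin\theta$, i.e.\ integrate the flow of the horizontal lift $\partial_r-(a\cos\theta+b\sin\theta)\partial_z$ of $\partial_r$ for the connection $\ker\alpha\cap\ker dt$ (this is exactly how the paper does it, and it also makes the implicit-function-theorem inversion unnecessary, since the flow map is automatically a diffeomorphism for small radius).

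Second, and more substantively: the final ``parametric Darboux normalization'' bringing $f$ to $r^2$ over $U=\{t:\gamma(t)\in\Op(V^{(3)})\}$ is both unnecessary and the one place where your argument has a real gap. A Moser/Gray argument only standardizes the structure on some uncontrolled neighbourhood of the core, not on the fixed $D^3_R$, and it cannot be cut off near $\partial U$ without either destroying the normal form $dz+f\,d\theta$ outside $U$ or moving the embedding $\phi_\gamma$ (you flag this yourself as the main obstacle, but it is not resolved). The point is that the statement only asks for \emph{some} small $\delta>0$, and this follows directly from what you already established, with no further normalization: on the region where $(\xi,J)$ is contact one has $\alpha\wedge d\alpha=f_r\,dz\wedge dr\wedge d\theta=(f_r/r)\,dz\wedge dx\wedge dy$, so $f_r/r$ is continuous and positive up to $r=0$ (its value at the axis is the leading coefficient of $f=O(r^2)$); after slightly shrinking $\Op(V^{(3)})$ so that the relevant set of parameters $t$ has compact closure inside the contact region, compactness gives $f_r/r\geq 2\delta$ for some $\delta>0$, hence $\partial_r f>2\delta r$, and integrating from $f(t;0)=0$ yields $f>\delta r^2$. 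This direct compactness argument is what the paper intends by ``the bounds readily follow'', and replacing your last step by it closes the gap.
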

\begin{proof}
Consider a path $\gamma_\sigma$ for a fixed $\gamma\in G$ and a set $\{T,X,Y,Z\}$ of commuting vector fields trivializing $\Op(\gamma_\sigma)$ such that:
\begin{itemize}
\item[-] $T$ preserves the foliation $\SF|_{\Op(\gamma_\sigma)}$ and the triple $\{X,Y,Z\}$ trivializes $T\SF|_{\Op(\gamma_\sigma)}$,
\item[-] $\alpha(Z)>0$ and $\alpha(t,0)(X)=\alpha(t,0)(Y)=0$.
\end{itemize}
This trivialization integrates to coordinates $(t,x,y,z)$ and an embedding $\psi : \mathbb{S}^1 \times D^3_{\rho}\longrightarrow V$ for some small $\rho > 0$. The pull--back of the defining form in these coordinates reads $\psi^*\alpha = h_0 dz + h_1 dx + h_2 dy$, where $h_0, h_1, h_2\in C^\infty(\mathbb{S}^1 \times D^3_\rho)$ satisfy $h_0 > 0$ and $h_1(t,0) = h_2(t,0) = 0$. Since $h_0$ is positive, we can rescale the form by $1/h_0$ and then changing to polar coordinates yields a local expression of the form $\psi^*\alpha = dz + fdr + gd\theta$, where the equality is up to a conformal factor, $f = O(r)$ and $g = O(r^2)$.\\

In order to conclude the statement we need to erase the $dr$ factor, or equivalently, to find a radial coordinate belonging to the distribution. Consider the fibration 
\begin{eqnarray*}
\{t\} \times D^3_{\rho} & \longrightarrow & D^2_{\rho} \\
(t;z,r,\theta) & \longmapsto & (r,\theta),
\end{eqnarray*}
endowed with the Ehresmann connection $\ker(\alpha) \bigcap \ker dt$. The radial vector field $\partial_r$ on the base $D^2_\rho$ lifts to $h_r = \partial_r - f\partial_z$. Let $F_{r,\theta}^t: [-\rho/2, \rho/2] \longrightarrow [-\rho, \rho]$ denote the flow of the vector field $-f\partial_z$ for a fixed value of $(t;r,\theta)$ and consider the diffeomorphism:
\begin{eqnarray*}
\Phi: \mathbb{S}^1 \times D^3_{\rho/2} & \longrightarrow & \mathbb{S}^1 \times D^3_{\rho} \\
(t;z,r,\theta) & \longmapsto & (t;F_{r,\theta}^t(z), r,\theta).
\end{eqnarray*}
The defining form $\alpha$ is expressed in these coordinates as $(\psi\circ\Phi)^* \alpha = dz + f d\theta$ for some (other) smooth function $f$. By shrinking $\Op(V^{(3)})$, the preimage $\Phi^{-1}(\Op(V^{(3)}))$ can be supposed to be saturated by leaves and we can fix the radius at some $0 < R < \rho/2$. Finally, the bounds on $f$ readily follow from $\alpha$ being a foliated contact form in $\Op(V^{(3)})$. \end{proof}

\begin{remark}
Given $\sigma \in V^{(4)}$, the maps $\phi_\sigma|_{I \times D^3_R}$ and $\phi_{\gamma_\sigma}|_{I \times D^3_R}$ do not necessarily agree. However, since the interval $I$ is contractible and $R > 0$ can be assumed to be sufficiently small so that $\im(\phi_\sigma^{-1} \circ \phi_{\gamma_\sigma})$ is disjoint from $\Op(I \times \mathbb{S}^{j-2})$, $\phi_\sigma$ can be deformed near $I \times \{0\}$ so that both embeddings agree in $I \times D^3_R$. \\
\end{remark}

The previous Proposition provides a local description of the foliated almost contact structure. We will use such model to deform the (almost) contact structure on the leaves to overtwisted almost contact structures. This is the content of the following Proposition.

\begin{proposition} \label{prop:4skeleton}
Suppose $(\xi, J)$ is a foliated contact structure on $\Op(V^{(3)})$. There exists a homotopy of foliated almost contact structures $(\xi_s, J_s)$ such that:
\begin{enumerate}
\item[a.] $(\xi_0, J_0)=(\xi,J)$ and $(\xi_1, J_1)$ is a foliated contact structure on $\Op(V^{(3)})$.
\item[b.] Given $\sigma\in V^{(4)}$, $\phi_\gamma^*\xi_1$ is foliated contact close to $I \times \{0\}$.
\item[c.] If we consider $\phi_\gamma^* \xi_1$ as a family $\{\xi_t\}_{t \in I}$ of contact structures on $D^3$, there exists a disc $\Delta \subset D^3$ which is overtwisted $\forall\xi_t$. 
\end{enumerate}
\end{proposition}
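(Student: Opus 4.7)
The strategy is a parametric Lutz twist, performed uniformly in the leaf parameter $t$ in a tubular neighbourhood of each transverse curve $\gamma = \gamma_\sigma$ provided by Corollary~\ref{cor:curves}, so as to introduce a common overtwisted disk in every leaf meeting $\im(\phi_\gamma)$. Begin by applying Proposition~\ref{lem:localModelCurve} to every $\sigma \in V^{(4)}$: this yields disjoint embeddings $\phi_\gamma : \mathbb{S}^1 \times D^3_R \to V$ and pullback forms $\widetilde\alpha = dz + f(t;z,r,\theta)\, d\theta$, with the positivity bounds $f > \delta r^2$ and $\partial_r f > 2\delta r$ holding on $\phi_\gamma^{-1}(\Op(V^{(3)}))$. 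All modifications below are supported in these charts and may be carried out simultaneously.

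Fix radii $0 < r_0 < r_1 < r_2 < R$ and a constant $0 < h_1 < \delta r_1^2$, so that $h_1 < f(t;z,r_1,\theta)$ whenever $\gamma(t) \in \Op(V^{(3)})$. Choose a standard Lutz profile: a smooth curve $r \mapsto (H_1(r), H_2(r))$ in $\R^2 \setminus \{0\}$, defined on $[0, r_1]$, agreeing with $(1, cr^2)$ near $r = 0$ for some $c>0$, satisfying the positivity $H_1 H_2' - H_2 H_1' > 0$, making one full positive loop around the origin so that $H_2(r_0) = 0$ with $H_1(r_0) < 0$ is the first zero of $H_2$ after $r = 0$, and matching $(1, h_1)$ at $r = r_1$ to infinite order. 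Extend to $r \in [r_1, R]$ by $H_1 \equiv 1$ and
\[
H_2(r; t, z, \theta) = h_1 + \rho(r)\bigl(f(t;z,r,\theta) - h_1\bigr),
\]
with $\rho : [r_1, R] \to [0,1]$ smooth and monotone, flat and vanishing at $r_1$, identically $1$ on $[r_2, R]$. Set $\widetilde\alpha_1 = H_1(r)\, dz + H_2(r;t,z,\theta)\, d\theta$, which agrees with $\widetilde\alpha$ for $r \ge r_2$. Build the homotopy $\widetilde\alpha_s$ via a smooth family of curves $(H_1^s, H_2^s)$ in $\R^2 \setminus \{0\}$ interpolating from $(1, f)$ at $s=0$ to the winding profile above at $s=1$, choosing compatible complex structures $J_s$ in the contractible space of compatible $J$'s.

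To verify the conclusions, recall that the leafwise condition for $\widetilde\alpha_1$ to be contact is $H_1 \partial_r H_2 - H_2 H_1' > 0$. For $r \le r_1$ this is the Lutz positivity, which gives condition (b) and the portion of (a) supported in $\{r \le r_1\}$. For $r \in [r_1, R]$ and $(t,z,\theta)$ with $\gamma(t) \in \Op(V^{(3)})$,
\[
\partial_r H_2 = \rho'(r)(f - h_1) + \rho(r)\partial_r f > 0,
\]
using the choice $h_1 < f$ and the bound $\partial_r f > 2\delta r > 0$; combined with $H_1 \equiv 1$ in this range this completes the verification of (a) over $\Op(V^{(3)})$. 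For (c), the disk $\Delta = \{z = 0, r \le r_0\} \subset D^3$ lies inside the uniform Lutz region where $(H_1, H_2)$ does not depend on $(t,z,\theta)$, and its characteristic foliation (cut out by $H_2(r)\, d\theta$) is a radial foliation with an elliptic singularity at $r = 0$ and a Legendrian boundary at $r = r_0$, hence the standard overtwisted disk in every leaf $\xi_t$ with $t \in I$.

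The main obstacle is arranging the match between the uniform Lutz profile and the parameter-dependent form $f(t;z,r,\theta)$ in the annulus $\{r_1 \le r \le r_2\}$ while preserving the leafwise contact condition on $\Op(V^{(3)})$: the Lutz profile cannot itself depend on $(t, z, \theta)$ (otherwise the overtwisted disk would vary from leaf to leaf), but the outer form genuinely does. Taking $h_1$ strictly below the infimum of $f|_{r = r_1}$ over the relevant $(t,z,\theta)$ resolves this, since then both summands in the formula above for $\partial_r H_2$ are non-negative, with at least one strictly positive on $(r_1, R)$.
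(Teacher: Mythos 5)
Your construction has a genuine gap, and it is structural rather than cosmetic: the support of your modification is not contained in the interior of the local model in the leaf directions. In the coordinates of Proposition \ref{lem:localModelCurve}, the twist region $\{r\le r_2\}$ is a neighbourhood of the entire segment $\{r=0\}$ (the $z$--axis) of each plaque $\{t\}\times D^3_R$, and it meets the boundary of the chart along the polar caps, where $r$ is small and $|z|$ is close to $R$. There your form $H_1(r)\,dz+H_2\,d\theta$ is a fully wound Lutz profile, independent of $z$, and it does not agree with the unmodified $dz+f\,d\theta$; hence neither $\widetilde\alpha_1$ nor any intermediate stage $\widetilde\alpha_s$ defines a global plane field in $T\SF$. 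In effect you are performing a Lutz twist along the transverse \emph{arcs} $\{r=0\}\subset\{t\}\times D^3_R$, which do not close up inside the leaves, instead of along closed transverse curves. On the plaques with $\gamma(t)\in\Op(V^{(3)})$ (which by Proposition \ref{lem:localModelCurve}(b) lie entirely inside $\Op(V^{(3)})$, so condition (a) forces the final structure to be contact there and to match $\xi$ just outside the tube) this cannot be repaired by a cutoff in $z$: you would need to interpolate, \emph{through contact structures} on the solid cylinder $\{r\le r_1\}$, between a full Lutz tube and the original structure at its two ends, which is a genuinely different problem that your formulas do not address. The paper avoids it altogether: it uses (essentially your outer interpolation) only to normalize the structure to $dz+\delta r^2d\theta$ near the core, and then performs the full Lutz twist along the leafwise \emph{closed} transverse loops $\theta\mapsto\phi_\gamma(t,0,\rho,\theta)$; these loops are interior to each plaque, so the twists are supported in small solid tori well inside the chart, glue trivially with $\xi$, and, by $t$--invariance of the normalized structure, can be performed uniformly in $t$, producing the fixed overtwisted disk $\Delta$.

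A secondary, fixable slip: your verification of (a) fails on the hypersurface $\{r=r_1\}$. Since the Lutz profile matches the constant $(1,h_1)$ to infinite order and $\rho$ is flat at $r_1$, both terms of $\partial_rH_2=\rho'(f-h_1)+\rho\,\partial_rf$ vanish there, so $H_1\partial_rH_2-H_2\partial_rH_1=0$ at $r=r_1$ and the structure is not contact along that locus, which lies inside $\Op(V^{(3)})$ for the relevant plaques; indeed your own conclusion only claims positivity on the open interval $(r_1,R)$. The remedy is the paper's interpolation: match the twist profile to an increasing model such as $\delta r^2$ and interpolate via $\chi\,\delta r^2+(1-\chi)f$, whose $r$--derivative is strictly positive by the bounds (\ref{eq:boundsf}). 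But even with this repair, the gluing failure at the polar caps described above remains the essential obstruction to your approach.
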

\begin{proof} There are two steps. First, find a foliated Darboux normal form in a neighborhood of a 1--parametric family of knots. Second, perform a Lutz twist along each of them.\\

Let us first deform the foliated (almost) contact structure provided by Proposition \ref{lem:localModelCurve} in order for it to be standard near the core. This procedure is done for each 4--cell $\sigma$ and their corresponding map $\gamma_\sigma$ as obtained in Corollary \ref{cor:curves}. In the local model $\phi_\gamma:\S^1\times D^3_R\longrightarrow V$ of Proposition \ref{lem:localModelCurve} the foliated defining form reads $\phi_\gamma^*\alpha=dz+fd\theta$ on each leaf. Consider a decreasing smooth cut--off function $\chi: [0,R]\longrightarrow [0,1]$ such that: 
$$\quad \chi(r) = 1 \text { in } [0,R/3]\mbox{ and }\chi(r) = 0 \text { in } [2R/3, R].$$
This cut--off is used in order to smoothly modify $f$ to the local model $\delta r^2$. Indeed, consider the function $\widetilde{f} \in C^\infty(\mathbb{S}^1 \times D^3_{R})$ defined by $\widetilde{f} = \delta r^2\cdot\chi(r) + f\cdot(1-\chi(r))$. The bounds in Proposition \ref{lem:localModelCurve} imply that
\begin{equation} \label{eq:partialf1}
\partial_r\widetilde{f}= \left( \chi'(\delta r^2 - f) \right) + \left( 2r\delta\chi + \dfrac{ \partial f}{\partial r}(1-\chi) \right)>0. 
 \end{equation}
on a neighborhood $\phi^{-1}(\Op(V^{(3)})$.\\

Since the 1--form $(\phi_\gamma)_*(dz + \widetilde f d\theta)$ agrees with $\alpha$ near the boundary of $\phi_\gamma(\S^1\times D^3_R)$, it extends to a global $1$--form $\widetilde\alpha$. Then, the leafwise defined plane field $\widetilde{\xi}= \ker(\widetilde{\alpha})$ is a foliated almost contact distribution which is, by construction, homotopic to $\xi$. This concludes the first step.\\

For a radius $\rho>0$ sufficiently small, consider the embedded torus
\begin{eqnarray*}
\eta: \mathbb{S}^1 \times \mathbb{S}^1 &  \longrightarrow & V \\
(t,\theta) & \longmapsto & \eta(t,\theta) =  \phi(t,0,\rho,\theta).
\end{eqnarray*}
It should be considered as an $\mathbb{S}^1$--invariant family of transverse loops to the (almost) contact structures on the leaves. We can now deform $\widetilde\xi$ by performing a full Lutz twist along each curve $\eta_t(\theta) = \eta(t,\theta)$, see \cite[Section 4.3]{Ge}. This yields an almost contact distribution $(\xi_1, J_1)$ such that for each $\theta\in\S^1$, the almost contact structure $\xi_1$ has a $1$--parametric family of overtwisted discs $\{\Delta_t\}_{t\in\S^1}$ centered at the points $\phi(t,0,\rho,\theta)$. The resulting foliated almost contact structure satisfies the properties of the statement.
\end{proof}

The results from Section \ref{sec:hprin} and the previous Proposition are enough to conclude Theorem \ref{thm:main} in the case that $\SF$ is a taut foliation.\\

\begin{proof}[Proof of Theorem \ref{thm:main} (Taut Case)]
We first apply Proposition \ref{prop:0123skeleton} to the foliated almost contact structure $(\xi, J)$ in order to obtain a foliated almost contact structure which is foliated contact on $\Op(V^{(3)})$.  This foliated almost contact structure satisfies the hypotheses of Proposition \ref{prop:4skeleton} and thus there exists a deformation to a foliated almost contact structure which is still contact near $\Op(V^{(3)})$ and each leafwise almost contact structure on the 4--cells has an overtwisted disk. Hence, we can apply Proposition \ref{thm:otHolonomous} to each 4--cell with $K=I$, $V=D^3$, $U = \Op(\mathbb{S}^2)$, $L = \partial I$ and the overtwisted disk $\Delta$ (where the neighborhoods are obtained using Proposition \ref{prop:4skeleton}). This yields a deformation of foliated almost contact structures to a foliated contact structure $(\xi_1, J_1)$.
\end{proof}

This concludes Theorem \ref{thm:main} for a taut foliation. The proof does not directly apply to the case of a general foliation $\SF$, however the strategy can be modified. This is explained in the subsequent section. \\

\section{Proof of the general case} \label{sec:general}

\subsection{Proof of Theorem \ref{thm:main}} \label{ssec:thm1}

\subsubsection{Preliminaries}

In the case that $\SF$ is a taut foliation, the argument in Section \ref{sec:proof} applies. In order to adapt the proof for a general foliation we introduce the notion of a vanishing Lutz twist. We fix a triangulation $\ST$ of $V$ adapted to the foliation $\SF$ and denote by $(\beta,\alpha)$ a pair associated to the foliated almost contact structure $(\xi,J)$.\\

Note that Proposition \ref{prop:0123skeleton} applies to any almost contact foliation. Hence, we suppose that $\xi$ is a foliated contact structure in $\Op(V^{(3)})$. We first state two lemmas as in Section \ref{ssec:adapt} and then introduce the vanishing family of Lutz twists in Subsection \ref{ssec:vanish}. This allows us to conclude Theorem \ref{thm:main} for any foliated almost contact structure on any codimension--1 foliated 4--fold $(V,\SF)$.\\

The arguments for the following two results are closely related to the proofs of Lemma \ref{lem:constructionGamma} and Proposition \ref{lem:localModelCurve}. We include the corresponding statements and leave the details of the proofs to the reader. $\widetilde\Op(X)$ denotes an arbitrarily small neighbourhood of the set $X$ that is chosen to be relatively compact within some $\Op(X)$. 

\begin{lemma}\label{lem:curvesGeneral}
Consider a 4--cell $\sigma\in V^{(4)}$ and the map $\phi_\sigma$ (as in Lemma \ref{lem:setupHolonomous}). There exist a sequence of open intervals $I \subsetneq I_1 \subsetneq I_2 \subsetneq I_3$ and a map $\gamma_\sigma: I_3 \longrightarrow V$ transverse to $\SF$ such that:
\begin{itemize}
\item[-] $\gamma_\sigma(t) = \phi(t;0)$ for $t \in I$,
\item[-] $\gamma_\sigma(I_1 \setminus I) \subset \widetilde\Op(V^{(3)})$, $\gamma_\sigma(I_2 \setminus I_1) \subset \Op(V^{(3)}) \setminus \widetilde\Op(V^{(3)})$ and $\gamma_\sigma(I_3 \setminus I_2) \subset V \setminus \Op(V^{(3)})$ .\\
\end{itemize}
Also, $\gamma_\sigma(I_3)$ and $\gamma_\tau(I_3)$ are disjoint if $\tau\in V^{(4)}$ is different from $\sigma$.\hfill$\Box$ \\
\end{lemma}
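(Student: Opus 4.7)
The strategy is to extend the transverse segment $\phi_\sigma(\cdot,0)\colon I \to V$ on both ends by following a globally defined vector field transverse to $\SF$. Because $\SF$ is cooriented there exists a non-vanishing $X \in \mathfrak{X}(V)$ with $X \pitchfork \SF$; let $\Phi^s_X$ denote its flow. By the construction of $\phi_\sigma$ in Lemma \ref{lem:setupHolonomous}, the endpoints $\phi_\sigma(0,0)$ and $\phi_\sigma(1,0)$ lie arbitrarily close to the two ``height function'' vertices of $\sigma$, and in particular inside $V^{(0)} \subset V^{(3)}$. I will define $\gamma_\sigma$ on an interval $I_3 = (-\epsilon_3, 1 + \epsilon_3)$ by setting $\gamma_\sigma(t) = \phi_\sigma(t,0)$ for $t \in I$, $\gamma_\sigma(t) = \Phi^t_X(\phi_\sigma(0,0))$ for $t \in (-\epsilon_3, 0]$ and $\gamma_\sigma(t) = \Phi^{t-1}_X(\phi_\sigma(1,0))$ for $t \in [1, 1+\epsilon_3)$, then smoothing the two corners; transversality to $\SF$ is preserved since all pieces are transverse.

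To arrange the nested inclusions I select $0 < \epsilon_1 < \epsilon_2 < \epsilon_3$ and set $I_k = (-\epsilon_k, 1 + \epsilon_k)$. For $\epsilon_1$: since $\gamma_\sigma(0), \gamma_\sigma(1) \in V^{(3)} \subset \widetilde\Op(V^{(3)})$ and $\gamma_\sigma$ is continuous, any sufficiently small $\epsilon_1$ keeps $\gamma_\sigma(I_1 \setminus I)$ inside $\widetilde\Op(V^{(3)})$. For $\epsilon_2$: since $\widetilde\Op(V^{(3)})$ is relatively compact in $\Op(V^{(3)})$ and the flow of a non-vanishing vector field cannot stay forever in an arbitrarily thin tube around the compact set $V^{(3)}$ (after the vertex, the orbit immediately enters the interior of an adjacent $4$-cell and, being transverse to $\SF$, crosses progressively further leaves), choosing $\epsilon_2$ slightly larger forces the curve into $\Op(V^{(3)}) \setminus \widetilde\Op(V^{(3)})$. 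For $\epsilon_3$: the same escape argument, applied now to $\Op(V^{(3)})$ instead of $\widetilde\Op(V^{(3)})$, yields a time at which the orbit of $X$ has left $\Op(V^{(3)})$ and entered the deep interior of a $4$-cell.

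The disjointness statement follows from a generic perturbation. The core images $\phi_\sigma(I,0)$ lie in pairwise disjoint open $4$-cells, so only the extended pieces with $t \in I_3 \setminus I$ can cause collisions across different simplices $\sigma, \tau \in V^{(4)}$. Since these are finitely many $1$-dimensional curves inside the $4$-manifold $V$, a $C^\infty$-small perturbation of each extension, carried out inductively over $V^{(4)}$ and preserving transversality to $\SF$ (an open condition), makes all the $\gamma_\sigma(I_3)$ pairwise disjoint. The main obstacle is the escape step: guaranteeing that the orbit of $X$ through a vertex of $\sigma$ actually exits a prescribed neighbourhood of $V^{(3)}$ in finite time; this will rely on choosing $\Op(V^{(3)})$ as a sufficiently thin tubular neighbourhood of $V^{(3)}$, so that the transversality of $X$ to $\SF$ forces its orbits to cross enough leaves to leave the tube.
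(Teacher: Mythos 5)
Your overall plan (extend the transverse core $\phi_\sigma(\cdot,0)$ past its two endpoints, record by nested intervals which shell around $V^{(3)}$ the extension lies in, and make the finitely many curves disjoint by general position in the $4$--manifold) is the intended one; the paper itself only sketches this, saying the argument is close to that of Lemma \ref{lem:constructionGamma}, with tautness replaced by simply pushing the arc into neighbouring $4$--cells. The disjointness step and the $I_1$ step are fine (modulo the small inaccuracy that $\phi_\sigma(0,0)$, $\phi_\sigma(1,0)$ are only \emph{close} to the extremal vertices, not in $V^{(0)}$; one should say that they lie in $\widetilde\Op(V^{(3)})$ once the neighbourhoods are chosen compatibly with $\phi_\sigma$).

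The genuine gap is the escape step, which you yourself flag. Fixing one global vector field $X$ transverse to $\SF$ and following its orbit does not give the conclusion: the claim that transversality ``forces orbits to cross enough leaves to leave the tube'' is not a valid reason, because $\Op(V^{(3)})$ is not confined to finitely many leaves --- the $3$--skeleton meets every leaf, so an orbit can cross infinitely many leaves while never leaving an arbitrarily thin neighbourhood of $V^{(3)}$ (it can skim along the ``vertical'' parts of the skeleton indefinitely). Moreover, even if the orbit does eventually exit, the lemma requires more than eventual exit: you need $\gamma_\sigma(I_2\setminus I_1)\subset\Op(V^{(3)})\setminus\widetilde\Op(V^{(3)})$ and $\gamma_\sigma(I_3\setminus I_2)\subset V\setminus\Op(V^{(3)})$ on the \emph{whole} subintervals, i.e.\ the curve must cross the two shells monotonically and not re-enter; choosing ``$\epsilon_2$ slightly larger'' for a fixed orbit gives no such control. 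The correct route is not to use a prescribed global flow at all: transversality to $\SF$ only constrains the sign of the component of $\dot\gamma$ in the coorienting direction, so in the adapted foliation chart around the extremal vertex you are free to steer the extension arc --- move slightly up (resp.\ down) and mostly sideways into an adjacent $4$--simplex $\tau$ having points above (resp.\ below) that level, heading towards the central region of the corresponding slice of $\tau$. This produces an arc along which the distance to $V^{(3)}$ increases monotonically up to a definite amount $\rho>0$ depending only on the triangulation; then choose $\Op(V^{(3)})$ and $\widetilde\Op(V^{(3)})$ (e.g.\ as sublevel sets of the distance to $V^{(3)}$, thinner than $\rho$) so that the arc crosses $\partial\widetilde\Op(V^{(3)})$ and $\partial\Op(V^{(3)})$ exactly once each, and define $I_1\subsetneq I_2\subsetneq I_3$ by these crossing times. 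With that replacement, the rest of your argument goes through.
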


\begin{lemma}\label{lem:normGeneral}
Consider a map $\gamma_\sigma:I_3\longrightarrow V$ (provided by Lemma \ref{lem:curvesGeneral}). There exists an embedding 
$$\kappa_\gamma: I_3 \times D^3_R \to V$$
for some radius $R\in\R^+$ such that $\kappa(\cdot;0)=\gamma(\cdot)$ and conforming the following properties:
\begin{itemize}
\item[a.] $\kappa_\gamma^* \SF = \SF_{I_3 \times D}$,
\item[b.] $\kappa_\gamma^{-1}(\Op(V^{(3)}))$ and $\kappa_\gamma^{-1}(\widetilde\Op(V^{(3)}))$ are saturated by leaves,
\item[c.] the map $\phi_\sigma|_{I \times D^3_R}$ can be perturbed so that it agrees with $\kappa_\gamma|_{I \times D^3_R}$.
\end{itemize}
Also, there exists an almost contact structure $(\widetilde\xi, \widetilde J)$ homotopic to $(\xi, J)$ which is foliated contact on $\Op(V^{(3)})$ such that:
$$\kappa_\gamma^*{\widetilde\xi} = \ker(\widetilde\alpha)\cap\kappa_\gamma^* \SF \text{ where } \widetilde\alpha = dz + r^2d\theta,\quad\forall t\in I_2.$$
\hfill$\Box$ \\
\end{lemma}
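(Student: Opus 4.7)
My plan is to follow the arguments used for Lemma \ref{lem:constructionGamma} and Proposition \ref{lem:localModelCurve}, adapting them to the fact that $\gamma_\sigma$ is now an arc (rather than a closed loop) that visits the regions $V \setminus \Op(V^{(3)})$, $\Op(V^{(3)}) \setminus \widetilde\Op(V^{(3)})$, and $\widetilde\Op(V^{(3)})$ in turn.

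First I would construct the embedding $\kappa_\gamma$. Choose a commuting frame $\{T, X, Y, Z\}$ trivializing a tubular neighborhood of $\gamma_\sigma(I_3)$ such that: $T$ is tangent to $\gamma_\sigma$ and preserves the foliation; $\{X, Y, Z\}$ trivializes $T\SF$; and $\alpha(Z) > 0$ with $\alpha(X)|_{\gamma_\sigma} = \alpha(Y)|_{\gamma_\sigma} = 0$. Flowing this frame produces coordinates $(t; x, y, z)$ and an embedding $\kappa_\gamma: I_3 \times D^3_R \to V$ satisfying $\kappa_\gamma^* \SF = \SF_{I_3 \times D}$, which is property (a). Property (b) is obtained by shrinking the neighborhoods, using that $T$ is tangent to a distribution transverse to $\SF$. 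For property (c), the contractibility of $I$ together with smallness of $R$ allows a small perturbation of $\phi_\sigma$ making $\phi_\sigma|_{I \times D^3_R}$ and $\kappa_\gamma|_{I \times D^3_R}$ coincide, exactly as in the remark following Proposition \ref{lem:localModelCurve}.

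Next I would normalize the foliated $1$--form. On $\kappa_\gamma^{-1}(\Op(V^{(3)}))$, which by (b) is saturated by leaves, the original structure is foliated contact, so the method of Proposition \ref{lem:localModelCurve} applies verbatim: after rescaling by the $dz$--coefficient and integrating the Ehresmann connection $\ker\alpha \cap \ker dt$ to eliminate the $dr$ term, we obtain $\kappa_\gamma^* \alpha = dz + f \, d\theta$ with $f(t; 0) = 0$, $f > \delta r^2$ and $\partial_r f > 2\delta r$ there. I then pick cutoffs $\chi : I_3 \to [0,1]$ equal to $1$ on $I_2$ and compactly supported in the interior of $I_3$, and $\eta : [0, R] \to [0, 1]$ equal to $1$ near $0$ and vanishing near $R$, and form the convex combination
\[ \widetilde f(t; z, r, \theta) = \chi(t)\eta(r)\, \delta r^2 + \bigl(1 - \chi(t)\eta(r)\bigr) f(t; z, r, \theta). \]
The $1$--form $dz + \widetilde f \, d\theta$ agrees with $\kappa_\gamma^* \alpha$ near $\partial(I_3 \times D^3_R)$ and so extends to a global $1$--form $\widetilde\alpha$ on $V$. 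Its kernel intersected with $T\SF$ furnishes the foliated plane field $\widetilde\xi$, and a compatible $\widetilde J$ homotopic to $J$ is selected using contractibility of the space of $\widetilde\xi$--compatible almost complex structures.

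The main obstacle will be verifying that $\widetilde\xi$ remains foliated contact on $\Op(V^{(3)})$ after the interpolation. As in the derivation of inequality (\ref{eq:partialf1}) in Proposition \ref{prop:4skeleton}, one computes
\[ \partial_r \widetilde f = \chi(t)\eta'(r)\bigl(\delta r^2 - f\bigr) + 2\delta r\, \chi(t)\eta(r) + \bigl(1 - \chi(t)\eta(r)\bigr)\partial_r f, \]
and the bounds $f > \delta r^2$ and $\partial_r f > 2\delta r$ on $\kappa_\gamma^{-1}(\Op(V^{(3)}))$ force each summand to be non--negative (and the total strictly positive for $r > 0$), so the contact condition is preserved there. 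On $\kappa_\gamma(I_2 \times D^3_R)$ the cutoffs give $\widetilde f = \delta r^2$ identically, and the linear reparametrization $r \mapsto r/\sqrt{\delta}$ (shrinking $R$) puts the form in the stated shape $dz + r^2 d\theta$, completing the argument.
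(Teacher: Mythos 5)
Your proposal is correct and follows exactly the route the paper intends (the paper leaves this proof to the reader as an adaptation of Lemma \ref{lem:constructionGamma}, Proposition \ref{lem:localModelCurve} and the interpolation of Proposition \ref{prop:4skeleton}): trivializing frame plus Ehresmann normalization to $dz+f\,d\theta$, then a cut--off interpolation to $\delta r^2$ using the bounds (\ref{eq:boundsf}) on $\kappa_\gamma^{-1}(\Op(V^{(3)}))$, with the $t$--cut--off supported over $I_3\setminus I_2$, where saturation guarantees no contact condition has to be preserved. The only points worth making explicit are that $\eta$ is chosen decreasing (so the term $\eta'(\delta r^2-f)$ is non--negative) and that the final shrinking of $R$ and rescaling absorbing $\delta$ are harmless for properties (a)--(c).
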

%

\subsubsection{Vanishing Lutz twist}\label{ssec:vanish}


In the case of a general foliation $\SF$ we cannot insert a transverse (circle) family of overtwisted disks intersecting every leaf. We solve this by considering a transverse interval assigned to each 4--cell and performing a {\it compactly supported} or {\it vanishing} Lutz twist. Let us define this.\\

The almost contact structure $\widetilde\xi$ obtained in Lemma \ref{lem:normGeneral} is foliated contact and $t$--invariant in $\kappa_\gamma(I_2 \times D^3)$ and also leafwise homotopic to $\xi$. Denote by $\xi^0$ the leafwise contact structure in each $D^3$ and consider a homotopy $\{\xi^s\}$ of plane fields such that $\xi^1$ is the result of performing a Lutz twist to $\xi^0$.\\

Consider a cut--off smooth function $\chi:I_2\longrightarrow[0,1]$ such that:
\begin{itemize}
\item[-] $\chi(t) = 1$ for $t \in I_1$ and $\chi(t) = 0$ for $t$ near $\partial I_2$,
\item[-] $\chi(t)$ is monotone in the two components of $I_2 \setminus I_1$. 
\end{itemize}

\begin{definition}[Vanishing Family of Lutz twists] \label{def:vanish}
The distribution $\xi^{\chi(t)} \cap \ker(dt)$ defined in $I_2\times D^3_R$ is called a vanishing family of Lutz twists along the segment $I_2\times\{0\}$.
\end{definition}

The concept of a vanishing Lutz twist allows us to conclude Theorem \ref{thm:main}.\\

\begin{proof}[Proof of Theorem \ref{thm:main}]
We first apply Proposition \ref{prop:0123skeleton} to the foliated almost contact structure $(\xi, J)$ and obtain a foliated almost contact structure which is foliated contact on $\Op(V^{(3)})$. Consider the trivializing local model provided by Lemmas \ref{lem:curvesGeneral} and \ref{lem:normGeneral} and perform a vanishing family of Lutz twists along each of the intervals $\kappa_\gamma$, for all $\gamma\in V^{(4)}$ . Then Proposition \ref{thm:otHolonomous} applied to each 4--cell relative to the 3--skeleton gives a deformation of the foliated almost contact structures to a foliated contact structure $(\xi_1, J_1)$. 
\end{proof}
%

\subsection{Proof of Corollary \ref{cor:main}} We describe the proof of Theorem \ref{thm:main} for continuous parametric families of almost contact structures $\{(\xi_t, J_t) \}_{t \in P}$, for an arbitrary compact set of parameters $P$. The steps carried out in the previous subsection admit a parametric version, let us briefly discuss it.

\begin{itemize}
\item[1.] Since the parameter space $P$ is compact, the neighbourhoods $\Op(V^{(j-1)}\cup G)$ (introduced in Lemma \ref{lem:setupHolonomous}) and $\Op(V^{(j)})$, in which the structures are already foliated contact, can be assumed not to depend on $t \in P$. Therefore, the results contained in Subsection \ref{ssec:adapt} refer only to the fixed foliation $\SF$ and thus the addition of parameters is trivial. The same reasoning applies to Lemma \ref{lem:curvesGeneral}.

\item[2.] In the zero skeleton we proceed as in Proposition \ref{prop:0123skeleton}, by setting $V=D^3$, $U=D^3_{0.5}$, $K = I \times P$, $L=\emptyset$, and $A=\emptyset$, and applying Theorem \ref{thm:HolonomousRel}. For the case $j \in \{1,2,3\}$, define $V=D^3$, $U=D^3_{1-\delta}$, $K = I \times P$, $L=\Op(\partial I) \times P$ and $A=\mathbb{S}^{j-2}_{1-\delta}$.

\item[3.] The deformation of $\xi$ to $\widetilde\xi$ provided by Lemma \ref{lem:normGeneral}, to which one applies the vanishing Lutz twist, can be reproduced parametrically. Indeed, the curves $\gamma$ around which the deformation is done do not depend on $t \in P$ and following the proof shows that a $P$--parametric family of embeddings $\kappa_{\gamma,t}: I_3 \times D^3_R \to V$ (as in Lemma \ref{lem:normGeneral}) can be constructed so that $R$ does not depend on $t$ and the pullback of the deformation $\widetilde\xi_t$ of $\xi_t$ is given by the kernel of $dz + r^2 d\theta$.

\item[4.] Then, the vanishing Lutz twist can be produced parametrically along a fixed family of intervals, depending on $t \in P$. In each local model $I \times D^3$ this yields an overtwisted disc $\Delta_t \subset D^3$. Then, an extension of Theorem \ref{thm:otHolonomous} for the case of a parametric family of overtwisted disks yields the result considering $K = I \times P$, $V=D^3$, $U = \Op(\mathbb{S}^2)$, $L = \partial I \times P$ and the overtwisted disks $\Delta_t$. \\
\end{itemize}

Let us mention some possible extensions of Theorem \ref{thm:main}. First, the hypotheses on orientability and coorientability can be weakened. The same argument as in the orientable and coorientable case holds for the non--orientable and non--coorientable cases by taking double covers appropiately. Second, Theorem \ref{thm:main} also holds for open 4--folds. Finally, we believe that the argument can be modified to apply to the case of foliations of higher codimension (the essential step being an appropriate construction of the vanishing family of Lutz twists).




\end{document}